\documentclass[journal, onecolumn]{IEEEtran}

\RequirePackage[OT1]{fontenc}
\RequirePackage[numbers,sort]{natbib}
\RequirePackage[colorlinks,citecolor=blue,urlcolor=blue]{hyperref}

\usepackage[ansinew]{inputenc}
\usepackage{graphicx}
\usepackage{textcomp}
\usepackage{amsfonts}
\usepackage{amsmath}
\usepackage{amssymb}
\usepackage{latexsym}
\usepackage[format=hang, justification=RaggedRight]{caption}
\usepackage{array}
\usepackage{float}
\usepackage{amsthm}
\usepackage{calc}
\usepackage{perpage}
\usepackage{textcomp}
\usepackage{verbatim}
\usepackage{marvosym}
\usepackage{hieroglf}
\usepackage{multirow}

\usepackage[english]{babel}
\usepackage{hyperref}
\bibliographystyle{abbrv}


\DeclareMathOperator*{\argmin}{arg\,min}

\def\RR{\mathbb R}
\begin{document}

\title{The Group Square-Root Lasso:\\
Theoretical Properties and Fast Algorithms}

\author{Florentina Bunea, Johannes Lederer, and Yiyuan She
\thanks{F. Bunea is with the Department
of Statistical Science at Cornell University and is supported in part by NSF grant DMS-10-07444, fb238@cornell.edu.}
\thanks{J. Lederer is with the Seminar for Statistics at ETH Z\"urich and acknowledges partial financial support as member of the German-Swiss Research Group FOR916 (Statistical   Regularization and Qualitative Constraints) with grant number 20PA20E-134495/1, johanneslederer@mail.de.}
\thanks{Y. She is with the Department of Statistics at Florida State University and is supported in part by NSF grant CCF-1116447, yshe@stat.fsu.edu.}}

\markboth{The Group Square-Root Lasso}{}

\maketitle

\begin{abstract}
We introduce and study the Group Square-Root Lasso (GSRL)  method for
estimation in high dimensional sparse regression models with group
structure. The new estimator minimizes the square root of the residual
sum of squares plus a penalty term proportional to the sum of the
Euclidean norms of groups of the regression parameter vector. The net
advantage of the method over the existing Group Lasso (GL)-type
procedures consists in the form of the proportionality factor used in
the penalty term, which for GSRL is independent of the variance of the
error terms. This is of crucial importance in models with more
parameters than the sample size, when estimating the variance of the
noise becomes as difficult as the original problem.  We show that the
GSRL estimator adapts to the unknown sparsity of the regression
vector, and has the same optimal estimation and prediction accuracy as
the GL estimators, under the same minimal conditions on the
model. This extends the results recently established for the
Square-Root Lasso, for sparse regression without group
structure. Moreover, as a new type of result for Square-Root Lasso
methods, with or without groups, we study correct pattern recovery,
and show that it can be achieved under conditions similar to those
needed by the Lasso or Group-Lasso-type methods, but with a simplified
tuning strategy.  We implement our method via a new algorithm, with
proved convergence properties, which, unlike existing methods, scales
well with the dimension of the problem.  Our simulation studies
support strongly our theoretical findings.
\end{abstract}

\begin{IEEEkeywords}
Group Square-Root Lasso, high dimensional regression, noise level, sparse regression, Square-Root Lasso, tuning parameter
\end{IEEEkeywords}


\newcommand{\degree}{\ensuremath{^\circ}}
\newcommand{\pmes}{\mathcal{P}}
\newcommand{\mr}{\mathbb{R}}
\newcommand{\mrpn}{\mathbb{R}_0^+}
\newcommand{\mq}{\mathbb{Q}}
\newcommand{\mn}{\mathbb{N}}
\newcommand{\mz}{\mathbb{Z}}
\newcommand{\me}{\mathbb{E}}

\newcommand{\mca}{\mathcal{A}}
\newcommand{\mcb}{\mathcal{B}}

\newcommand{\mpr}{\mathbb{P}}
\newcommand{\mprn}{\mathbb{P}_n}

\newcommand{\si}{\sum_{i=1}^n}
\newcommand{\sino}{\frac{1}{n}\sum_{i=1}^n}
\newcommand{\sgr}{\sum_{j=1}^q\sqrt{T_j}}
\newcommand{\sgrs}{\sum_{j\in S}\sqrt{T_j}}
\newcommand{\sgrsc}{\sum_{j\in S^c}\sqrt{T_j}}
\newcommand{\See}{\Sigma_{1,1}}
\newcommand{\Sze}{\Sigma_{2,1}}
\newcommand{\Szet}{\widetilde\Sigma_{2,1}}
\newcommand{\Sez}{\Sigma_{1,2}}
\newcommand{\Szz}{\Sigma_{2,2}}
\newcommand{\Seem}{\Sigma_{1,1}^{\text -1}}
\newcommand{\Seemt}{\widetilde\Sigma_{1,1}^{\text -1}}
\newcommand{\ona}{\operatorname}
\newcommand{\sgn}{\operatorname{sgn}}
\newcommand{\bsb}{\boldsymbol}
\newcommand{\rd}{\,\mathrm{d}}

\newcommand{\undz}{\underline{Z}}
\newcommand{\ovez}{\overline{Z}}

\newcommand{\aln}[1]{\begin{align*}#1\end{align*}}
\newcommand{\eqn}[1]{\begin{equation*}#1\end{equation*}}
\newcommand{\al}[1]{\begin{align}#1\end{align}}
\newcommand{\eq}[1]{\begin{equation}#1\end{equation}}

\newtheorem{theorem}{Theorem}[section]
\newtheorem{proposition}{Proposition}[section]
\newtheorem{lemma}{Lemma}[section]
\newtheorem{corollary}{Corollary}[section]
\newtheorem{remark}{Remark}[section]
\newtheorem{example}{Example}[section]
\newtheorem{definition}{Definition}[section]

\newenvironment{pro}{\begin{proof}[Proof]}{\end{proof}}

\section{Introduction}

Variable selection in high dimensional  linear regression models has become a very active area of research in the last decade.  In
linear models one observes  independent response random variables  $Y_i \in \RR $, $1 \leq i \leq n$,  and assumes that each $Y_i$  can be written as a  linear function of  the $i$-th observation on a $p$-dimensional predictor vector  $X_i =: (X_{i1}, \ldots, X_{ij}, \ldots, X_{ip})$, corrupted by noise:
\begin{equation}\label{Model1}
Y_i = X_i\beta^0 +\sigma\epsilon_i,
\end{equation}
where $\beta^0\in \mr^p$ is the unknown
 regression vector, $\sigma\geq 0$ is
the noise level, and for each $1 \leq i \leq n$,  the additive term  $\epsilon _i$,  is a mean zero random noise component.  Postulating that some components of $\beta^0$ are zero is equivalent to assuming that the corresponding  predictors are unrelated to the response after controlling for the predictors with non-zero components.  The problem of predictor selection can be therefore solved by devising methods that estimate accurately where the zeros occur.

 More recently, a large literature focusing on the selection of groups of predictors has been developed. This problem requires methods that set to zero entire groups of coefficients and is the focus of this work. Group selection  arises naturally
whenever it is plausible to assume, based on scientific considerations, that entire subsets of the $X$-variables are unrelated to the response. More generally, the need for
setting groups of coefficients to zero is a building block in variable selection in general additive models and sparse kernel learning, as discussed in  Meier et al. \cite{Meier9} and Koltchinskii and Yuan \cite{KY10}, among others.  Another direct application is to  predictor selection in the multivariate response regression model
\begin{equation}\label{Model2}
Z =  UA + E,
\end{equation}
where $Z$ is an $n \times m$ matrix in which each row  contains measurements on an  $m$-dimensional random response vector,
$U$ is a  $n \times p$ observed matrix whose rows are the $n$  measurements of  a $p$-dimensional predictor, $E$ is the zero  mean noise matrix, and $A$ is the unknown coefficient matrix.  A predictor $U_j$ is not present in this model if the $j$-th row of $A$ is equal to zero.  Using the vectorization operator $vec$,  (\ref{Model2})  can be written  as $vec( Z') =  (U \otimes  I) vec(A') + vec(E')$.
Thus, if one treats rows of $A$ as groups, predictor selection in model (\ref{Model2}) can be regarded as group selection
in linear models of type (\ref{Model1}).

Perhaps the most popular method for group selection is the Group-Lasso, introduced by Yuan and Lin \cite{Yuan06} and further studied theoretically in a number of works, including Lounici et al. \cite{lounici11}, Wei and Huang \cite{Weihuang10}.
The method consists in minimizing the empirical square loss plus  a term proportional to  the sum of the Euclidean norms of groups of coefficients. Specifically,  let $Y = (Y_1, \ldots, Y_n)'$. We  denote by  $X\in\mr^{n\times p}$ the  matrix with rows $X_i$, $ 1 \leq i \leq n$, and refer to it in the sequel as the design matrix.  We  assign the individual columns of the design matrix and the
corresponding entries of the regression vector to groups. For this, we consider a
partition $\{G_1,\dots,G_q\}$ of $\{1,\dots,p\}$ into groups and denote the
cardinality of a group $G_j$ by $T_j$ and the minimal group size by
$T_{\min}:=\min_{1\leq j\leq q}T_j$. We then assign all
columns of the design matrix $X$ with indices in $G_j$ to the group $G_j$. The corresponding
matrix is denoted by $X^j\in\mr^{n\times T_j}$. Similarly, for any vector
$\beta\in\mr^p$, we assign all components of $\beta$
with indices in $G_j$ to the group $G_j$ and denote the corresponding
vector by $\beta^j\in
\mr^{T_j}$.  We define the
 active set as  \begin{equation}\label{S} S:=\{1\leq j\leq q:\beta^{0j}\neq 0\}.\end{equation}
 We will denote by $\|v\|_2$ the Euclidean norm  of a generic vector $v$. Let  $\lambda > 0$ be a given tuning sequence.
With this notation, the Group Lasso estimator is given by
\[
\bar\beta:=\argmin_{\beta\in\mr^p}\left\{\frac{\|Y-X\beta\|_2^2}{n}+\frac{\lambda}{n}\sgr \|\beta^j\|_2\right\}. \]

 Optimal estimation of $\beta^0$, $X\beta^0$ and $S$ via the Group-Lasso is very well understood, and we refer
 to B\"uhlmann and van de Geer \cite{Buhlmann11} for an overview. However, one outstanding problem remains, and it is connected  to the practical choice of $\lambda$ that leads, respectively, to optimal estimation with respect to each of these three aspects.  It is agreed upon that whereas choosing $\lambda$ via cross-validation will yield estimates with good prediction and estimation accuracy, this choice is not optimal for correct estimation of $S$.  A possibility  is to  determine first  the theoretical forms of the tuning parameter that  yield optimal performances, respectively, and then estimate the unknown quantities
 in these theoretical expressions. One important reason for which this approach has not become popular is the fact that
 the respective optimal values of $\lambda$ depend on $\sigma$, the noise level,  and the  accurate estimation of $\sigma$  when $p > n$  may be as difficult as the original problem of selection. A step forward has been made by  Belloni et al. \cite{Belloni11},  in the context of variable (not group) selection. They introduced the Square-Root Lasso (SRL) given below
 \[
\bar{{\bar\beta}}:=\argmin_{\beta\in\mr^p}\left\{\frac{\|Y-X\beta\|_2}{\sqrt n}+\frac{\lambda}{n} \sum_{l=1}^{p}|\beta_l|\right\}. \]
The consideration of  the square-root form  of the criterion was  first proposed by Owen \cite{Owen07} in the statistics literature, and a similar approach is the Scaled Lasso by Sun and Zhang \cite{SunZhang12}. Belloni et al. \cite{Belloni11} studied theoretically the estimation and prediction accuracy of the SRL estimator $\bar{\bar{\beta}}$, and showed that
it is similar to that of the Lasso,  with the net advantage that optimality can be achieved for a tuning sequence independent  of $\sigma$. This makes this version of the Lasso-type procedure much more appealing when $p$ is large, especially when $p > n$, and opens the question whether the same holds true for pattern recovery, which was not studied in  \cite{Belloni11}. Moreover, given the wide applicability of group selection methods, it motivates the study of a grouped version of the Square-Root Lasso. We therefore introduce and study the Group Square-Root Lasso (GSRL)
\eq{\label{eq.yoyo.estimator}
\hat\beta:=\argmin_{\beta\in\mr^p}\left\{\frac{\|Y-X\beta\|_2}{\sqrt n}+\frac{\lambda}{n}\sgr \|\beta^j\|_2\right\}.
}
Our contributions are: \\

\noindent (a) To extend the ideas behind the Square-Root Lasso for  group selection and develop a new method, the
 Group Square-Root Lasso (GSRL). \\
\noindent (b) To show that  the GSRL estimator has optimal estimation and prediction, achievable with a $\sigma$-free tuning sequence $\lambda$. This generalizes the results for SRL obtained by  \cite{Belloni11}. \\
\noindent (c) To show that GSRL leads to correct pattern recovery,  with a $\sigma$-free tuning sequence $\lambda$.
This provides, in particular, a positive answer to the question left open in   \cite{Belloni11}.\\
\noindent (d) To propose algorithms with guaranteed convergence properties that scale well with the size of the problem,  measured by $p$, thereby extending the scope of the existing procedures, which are performant mainly  for small and moderate values of $p$.

We address (a), (b)  and (c) in Section 2 below, and (d) in Section 3. Section 4 contains simulation results that support strongly our findings. The proofs of all our results are collected in the Appendix. 


\section{Theoretical Properties of the Group Square-Root Lasso}

In this section, we show that: (i) Nothing is lost by using $\|Y - X\beta\|_2$ instead of $\|Y - X\beta\|_2^2$ in the definition of   our estimator $\hat{\beta}$ given by (\ref{eq.yoyo.estimator}). Specifically,  the Group Square-Root Lasso has the same accuracy as the  Group Lasso, under essentially the same conditions, in terms of estimation, prediction and subset recovery.  (ii)  The net gain is that these properties are achieved via a tuning parameter
$\lambda$ that is $\sigma$-free, in contrast with the Group-Lasso, which requires a tuning parameter $\lambda$ that is a function of  $\sigma$.

The following notation and conventions will be used throughout the paper. We assume that
the design matrix is nonrandom and normalized such that the diagonal
entries of the Gram matrix
$\Sigma:=\frac{X'X}{n}$ are equal to 1.  We denote the cardinality of the set $S$ defined in (\ref{S}) above by $s$, that is $|S| = s$, and refer to $s$ as the sparsity index.  We set  $s^*:=\sum_{j\in S}T_j$. We
 denote by  $\beta_S\in\mr^s$ (and similarly $\beta_{S^c}\in\mr^{p-s}$) the vector that consists of the entries of
 $\beta\in\mr^p$ with indices in $\bigcup_{j\in S}G_j$ (or $\bigcup_{j\in S^c}G_j$). Corresponding notation
 is used  for matrices. For a generic vector $v$ we denote by  $\|v\|_{\infty}$ its supremum norm, the maximum absolute value of its coordinates. \\

\subsection{Estimation and Prediction}

We begin with the study of the estimation and prediction accuracy of the Group Square-Root Lasso.  We first state and discuss the conditions under which these results will be established.

As shown in Theorem \ref{thm.yoyo.pred}  below,  our results
hold under  the general  Compatibility Condition on the design matrix, introduced for the Lasso in \cite{vandeG07},  and extended to this setting in \cite[Page~255]{Buhlmann11}. This condition is a slight relaxation of the
widely used Cone or Restricted Eigenvalues Condition (see \cite{Bickel09}).  We refer to  \cite{Sara09} and  \cite[Chapter 6.13]{Buhlmann11} for a  detailed comparison between these two and other related conditions. 

\paragraph{Compatibility Condition (CC)} We say that the Compatibility
Condition is met for $\kappa>0$ and $\gamma>1$ if
\eq{\label{eq.yoyo.compconst}
\sgrs\|\delta^j\|_2\leq \frac{\sqrt {s^*}\|X\delta\|_2}{\sqrt n\kappa}
}
for all $\delta\in\Delta_{\gamma}$, where
\begin{equation}
  \Delta_{\gamma}:=\{\delta\in\mr^p:\sgrsc\|\delta^j\|_2\leq \gamma \sgrs \|\delta^j\|_2\}.
\end{equation}

\noindent We  refer to $\kappa$ and $\gamma$ as the compatibility constants and
write \[(\kappa,\gamma)\in C(X,S). \]The compatibility constant $\kappa$ measures the correlations in the
design matrix: the smaller the value of  $\kappa$,  the larger the correlations.


For clarity of exposition, we will assume for the rest of the paper that the additive noise terms $\epsilon_i$ have a standard Gaussian distribution.

The second ingredient in our analysis is  the  definition of the appropriate noise component  that needs to be compensated for by the tuning parameter $\lambda$. The proofs of our results reveal that it  is
\begin{equation}
V:=\max_{1\leq j \leq q}\left\{\frac{\sqrt n\|(X'\epsilon)^j\|_2}{\sqrt{T_j}\|\epsilon\|_2}\right\}.
\end{equation}\label{v}
For $\gamma > 1$ given by condition {\bf CC}  above , let $\overline\gamma
:=\frac{\gamma+1}{\gamma-1}$. For given $\lambda > 0$  define the
set
\begin{equation}\label{a}\mathcal{A}: = \left\{ V \leq \lambda/\overline\gamma\right\}.\end{equation}

We first establish our result over the set $\mathcal{A}$. We then  show, in Lemma \ref{lambdagauss} below, that the set $\mathcal{A}$  has probability $1-\alpha$, for any $\alpha$ close to zero, for an appropriate choice of the tuning parameter $\lambda$. Since $\lambda$ will be chosen relative to the ratio of the random variables that define $V$,   the factor  $\sigma$ cancels  out. This  is the
key for obtaining a tuning parameter $\lambda$ independent of the variance of the noise. \\

 \noindent
 With $\gamma > 1$ given by {\bf CC} above and $\kappa >0$ given by {\bf CC},
we assume in what follows that the sparsity index $s^*$ is not larger than the sample size $n$. Specifically, we assume that
\begin{equation}\label{less}  s^* < \frac{n^2\kappa^2}{\lambda^2}. \end{equation}
We will show in Lemma \ref{lambdagauss}  below that the value of $\lambda$ for which the event  $\mathcal{A}$ has high probability is, in terms of orders of magnitude,  no larger than  $\lambda = O(\sqrt{n\log q})$.  Therefore, and using the notation $\lesssim$ for inequalities that hold up to multiplicative constants, the condition on the sparsity  index becomes
\[ s^* \lesssim \frac{n}{\log q }, \]
which re-emphasizes the introduction of $s^*$ in this analysis to start with: whereas we allow $ p > n$, we cannot expect
good performance of any method from a  limited sample size $n$, unless the true model has  essentially fewer parameters than $n$.

The following result summarizes  the prediction and estimation properties of the Group Square-Root Lasso estimator.  It generalizes \cite[Theorem~1]{Belloni11}, where the Square-Root Lasso is treated, corresponding in our set-up to the special case  $q=p$.

\begin{theorem}\label{thm.yoyo.pred} Assume that
  $(\kappa,\gamma)\in C(X,S)$ and that (\ref{less}) holds.  Then, on the event $\mathcal{A}$,  the following hold: 
\eqn{
\|X(\widehat\beta-\beta^0)\|_2 \lesssim \frac{\sigma \lambda\sqrt{s^*}}{\kappa\sqrt{n}}
}
and
\eq{\label{eq:yoyoest}
  \sgr\|(\widehat\beta-\beta^0)^j\|_2  \lesssim \frac{\sigma\lambda s^*}{\kappa^2n}.
}
\end{theorem}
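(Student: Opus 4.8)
The plan is to run the classical basic-inequality argument, adapted to the square-root loss. Write $\delta:=\widehat\beta-\beta^0$, $A:=\sgrs\|\delta^j\|_2$, $B:=\sgrsc\|\delta^j\|_2$ (so that $A+B=\sgr\|\delta^j\|_2$), and set $\widehat\sigma:=\|Y-X\widehat\beta\|_2/\sqrt n$ and $\sigma_\epsilon:=\|Y-X\beta^0\|_2/\sqrt n=\sigma\|\epsilon\|_2/\sqrt n$. First I would record the basic inequality: since $\widehat\beta$ minimizes the GSRL criterion and $\beta^{0j}=0$ for $j\in S^c$, optimality together with the reverse triangle inequality applied group-by-group to the penalty gives $\widehat\sigma-\sigma_\epsilon\le\frac{\lambda}{n}(A-B)$.

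The only genuinely square-root-specific ingredient is a matching lower bound on $\widehat\sigma-\sigma_\epsilon$. For this I would use convexity of $v\mapsto\|v\|_2$: the subgradient inequality at $\sigma\epsilon$ yields $\widehat\sigma-\sigma_\epsilon\ge-\rho/\sqrt n$ with $\rho:=\langle\epsilon,X\delta\rangle/\|\epsilon\|_2$. The term $\rho$ is exactly what $V$ is built to absorb: writing $\langle\epsilon,X\delta\rangle=\sum_j\langle(X'\epsilon)^j,\delta^j\rangle$, applying Cauchy--Schwarz within each group, and inserting the definition of $V$ gives $|\rho|\le\frac{V}{\sqrt n}(A+B)$, hence $|\rho|\le\frac{\lambda}{\overline\gamma\sqrt n}(A+B)$ on $\mca$. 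Combining this lower bound with the basic inequality produces $-\frac{1}{\overline\gamma}(A+B)\le A-B$; since $\overline\gamma=(\gamma+1)/(\gamma-1)$ is chosen precisely so that this rearranges to $B\le\gamma A$, we obtain $\delta\in\Delta_\gamma$ and may invoke \textbf{CC}, which gives $A\le\sqrt{s^*}\|X\delta\|_2/(\sqrt n\,\kappa)$.

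The crux, and the step I expect to be the main obstacle, is converting this first-order control into a genuine quadratic bound on $\|X\delta\|_2$, since, unlike the squared loss of the Group Lasso, the square-root loss does not by itself supply a $\|X\delta\|_2^2$ term. Here I would use the exact identity $\|X\delta\|_2^2/n=(\widehat\sigma-\sigma_\epsilon)(\widehat\sigma+\sigma_\epsilon)+2\sigma\langle\epsilon,X\delta\rangle/n$ and bound each piece: $\widehat\sigma-\sigma_\epsilon\le\frac{\lambda}{n}A$, $\widehat\sigma+\sigma_\epsilon\le 2\sigma_\epsilon+\frac{\lambda}{n}A$, and the cross term via the $\rho$-estimate together with $B\le\gamma A$. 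Substituting \textbf{CC} to replace every remaining $A$ by a multiple of $\|X\delta\|_2/\sqrt n$ yields a self-bounding inequality of the shape $D\le c_0 D+c_1\,\sigma_\epsilon\frac{\lambda\sqrt{s^*}}{n\kappa}\sqrt D$, where $D:=\|X\delta\|_2^2/n$ and $c_0:=\lambda^2 s^*/(n^2\kappa^2)$. Assumption (\ref{less}) is exactly the statement $c_0<1$, so the quadratic term is absorbed on the left; dividing by $\sqrt D$ (the case $\|X\delta\|_2=0$ being trivial) gives $\|X\delta\|_2\lesssim\sigma_\epsilon\lambda\sqrt{s^*}/(\sqrt n\,\kappa)$, and since $\|\epsilon\|_2\asymp\sqrt n$ forces $\sigma_\epsilon\lesssim\sigma$, this is the claimed prediction bound.

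The estimation bound (\ref{eq:yoyoest}) then drops out: on $\Delta_\gamma$ we have $\sgr\|\delta^j\|_2=A+B\le(1+\gamma)A$, and \textbf{CC} turns the right-hand side into $(1+\gamma)\sqrt{s^*}\|X\delta\|_2/(\sqrt n\,\kappa)$; inserting the prediction bound just established gives the stated order $\sigma\lambda s^*/(\kappa^2 n)$. The delicate points to watch are the two-sided handling of $\widehat\sigma-\sigma_\epsilon$ (one side from optimality, the other from convexity) and ensuring that (\ref{less}) leaves enough room that the factor $1/(1-c_0)$ stays bounded, which is what keeps the constant hidden in $\lesssim$ universal.
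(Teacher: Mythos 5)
Your proposal is correct and follows essentially the same route as the paper's proof: the same basic inequality from optimality, the same subgradient/convexity lower bound absorbed by $V\le\lambda/\overline\gamma$ to get $\widehat\delta\in\Delta_\gamma$, the same exact identity $\|X\widehat\delta\|_2^2/n=(\widehat\sigma-\sigma_\epsilon)(\widehat\sigma+\sigma_\epsilon)+2\sigma\epsilon'X\widehat\delta/n$ leading to the self-bounding inequality with $c_0=\lambda^2 s^*/(n^2\kappa^2)<1$ by (\ref{less}), and the same cone-plus-CC deduction for the estimation bound. The only cosmetic point is that your final step $\sigma_\epsilon\lesssim\sigma$ requires the high-probability event $\|\epsilon\|_2\lesssim\sqrt n$ (the paper's set $\mathcal{B}$), which the paper likewise uses implicitly.
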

\noindent The precise constants in the statements above are given in the proof of this theorem, presented in the appendix.
Theorem \ref{thm.yoyo.pred}  is the crucial step in showing  that the GSRL estimator, which has a tuning parameter free of $\sigma$,  has the same optimal rates of convergence as the Group Lasso estimator, see for instance Lounici et al. \cite{lounici11} or B\"uhlmann and van de Geer \cite{Buhlmann11}. We will  determine the size of $\lambda$ in Lemma \ref{lambdagauss}  below and state the resulting rates in Corollary \ref{sum}.\\

\begin{remark}
For prediction, the condition
  \begin{equation*}
    \sgrs\|\delta^j\|_2-\sgrsc\|\delta^j\|_2\leq \frac{\sqrt {s^*}\|X\delta\|_2}{\sqrt n\kappa}
  \end{equation*}
for $\delta\in\Delta_1$ could replace the CC condition \eqref{eq.yoyo.compconst}, cf.~\cite{Bellarx}. We additionally note that prediction (in contrast to correct subset recovery and estimation) is even possible for highly correlated design matrices, see \cite{MomoYoyo12, vdGeer11}. However, a detailed discussion of prediction for correlated design matrices is not in the scope of this paper.
\end{remark}
\begin{remark}
Inequality~\eqref{eq:yoyoest} directly implies correct subset recovery for the Group Square-Root Lasso in the special case $\sigma=0$, cf.~\cite{Bellarx}. In contrast, $\sigma=0$ and the conditions of Theorem~\ref{thm.yoyo.pred} are not sufficient to ensure correct subset recovery for the Lasso and the Group Lasso.
\end{remark}



\subsection{Correct subset recovery}

We study below the subset recovery properties of the Group Square-Root
Lasso.  Similarly to the analysis of all  other Lasso-type procedures,  subset recovery can only be guaranteed under
additional assumptions on the model.

The first condition is the  Group
Irrepresentable Condition, which is an additional condition on the
the design matrix $X$. To introduce it, we decompose the
 Gram matrix $\Sigma$ with $\Sigma_{1,1}:=\frac{X_S'X_S}{n}$,
$\Sigma_{1,2}:=\frac{X_S'X_{S^c}}{n}$,
$\Sigma_{2,1}:=\frac{X_{S^c}'X_{S}}{n}$, and
$\Sigma_{2,2}:=\frac{X_{S^c}'X_{S^c}}{n}$. We  define $\Szet:=(0~~\Sez)'$
and $\Seemt:=(0~~\Seem)'$.

\paragraph{Group Irrepresentable Condition (GIR)} We say that the Group
Irrepresentable Condition is met for $0<\eta<1$  if $\Sigma_{1,1}$ is
invertible and
\eq{\label{eq.yoyo.groupirr}
\max_{v:\|v^k\|_2\leq \sqrt{T_k}}\max_{1\leq j \leq q}\frac{\|(\Szet\Seem v)^j\|_2}{\sqrt{T_j}}<\eta.
}
We refer to $\eta$ as the group irrepresentable constant and
write \[ \eta\in I(X, S).\]

\noindent

The Group Irrepresentable Condition implies  the  Compatibility
Condition discussed above, see for instance  \cite{Buhlmann11},  and it is therefore more restrictive. However,  it is  essentially a
necessary and sufficient condition  for consistent support recovery via  Lasso-type procedures, see
\cite{BiYuConsistLasso}. We refer to   \cite{Buhlmann11, BiYuConsistLasso,
Meinshausen06, Zou06} for  different versions of the Irrepresentable
Condition and further discussion of  these versions.

The second condition needed for precise support recovery regards the strength of the signal $\beta^0$. Because the noise can conceal small components of the  regression
vector $\beta^0$, some of its  nonzero components need to be
sufficiently large to be detectable. We formulate
this in the Beta Min Condition, similarly to  \cite{BuneaEN} and \cite{Schelldorfer11,
  Sara11}:
\paragraph{Beta Min Condition (BM)} We say that the Beta Min Condition is met for $m\in\mr^s$  if
\eq{\label{eq.yoyo.betamin}
\|\beta^{0j}\|_\infty\geq m_j,
}
for all $j \in S$.
We then write $m\in B(\beta^0)$.\\

\noindent   Note that only one component of $\beta^0$ in each non-zero group has to be sufficiently large,  because we aim to select whole groups,  and not individual components.\\

\noindent  A slightly different tuning parameter, still independent of $\sigma$ is needed for consistent subset recovery. Let  $\widetilde\eta:=\frac{1+\eta}{1-\eta}$, for $\eta$ given by GIR above, and recall that
$\overline{\gamma} = \frac{\gamma+1}{\gamma-1}$, with $\gamma$ defined in condition CC above.  Define the event

\begin{equation}\label{a1}
\mathcal{A}_1 = \{ V\leq \lambda/(\overline\gamma\vee 2\widetilde\eta)\}. \end{equation}

\noindent Finally, we introduce the following notation
\eqn{
\xi_{\|\cdot\|_{\infty}}:=\max_{v:\|v^k\|_2\leq \sqrt{T_k}}\max_{1\leq j\leq q} \frac{\|(\Seemt v)^j\|_{\infty}}{\sqrt{T_j}}.
}
Note that for orthonormal design matrices,
$\xi_{\|\cdot\|_\infty}=1$. 
Let $\alpha \in (0, 1)$ be given. \\

\begin{theorem}\label{thm.yoyo.varsel}
Assume that  the conditions CC, GIR and BM are met, and that (\ref{less}) holds. Assume that $(\kappa,\gamma)\in C(X,S)$ and $\eta\in I(X,S)$.  Let $D > 0$ be a dominating constant. Then, on the set $\mathcal{A}_1$, we have, with probability greater than $1 - \alpha$: \\

\noindent (1) \  $\widehat\beta_{S^c}=0$; \\
\noindent (2) \
For all $1\leq j \leq q$,
\eqn{
  \|(\widehat\beta-\beta^0)^j\|_{\infty}\leq
  D \frac{\sqrt{T_j}\sigma\lambda}{n}.
}
\noindent (3) \ If there exists an $m\in B(\beta^0)$ such that $m_j \geq  D\frac{\sqrt{T_j}\sigma\lambda}{n }$, for each  $ j \in S$, then
\eqn{
S= \widehat S.
}
\end{theorem}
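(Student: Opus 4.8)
The plan is to run a primal--dual witness (oracle) argument, adapted to the non-smooth square-root loss. First I would record the subgradient optimality conditions for (\ref{eq.yoyo.estimator}): provided the residual $r:=Y-X\beta$ is nonzero (which holds almost surely when $\sigma>0$), a point $\beta$ is optimal iff for every group $j$ there is a $z^j$ with $\|z^j\|_2\leq\sqrt{T_j}$, equal to $\sqrt{T_j}\beta^j/\|\beta^j\|_2$ when $\beta^j\neq0$, such that $(X^j)'r/(\sqrt n\|r\|_2)=(\lambda/n)z^j$. I would then construct the oracle estimator $\widetilde\beta$ by setting $\widetilde\beta_{S^c}:=0$ and letting $\widetilde\beta_S$ solve the GSRL restricted to the columns $X_S$; write $r:=Y-X\widetilde\beta$. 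The stationarity equations on $S$ read $X_S'r=(\lambda\|r\|_2/\sqrt n)z_S$ with $\|z^k\|_2\leq\sqrt{T_k}$, and since $Y=X_S\beta_S^0+\sigma\epsilon$ this rearranges into the key identity
\[
\widetilde\beta_S-\beta^0_S=\Seem\left(\frac{\sigma X_S'\epsilon}{n}-\frac{\lambda\|r\|_2}{n\sqrt n}\,z_S\right),
\]
which drives both the support claim and the $\ell_\infty$ bound. Invertibility of $\See$ (part of GIR) makes $\widetilde\beta_S$ unique.

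For part (1) I would verify strict dual feasibility off $S$, namely $\|(X^j)'r\|_2<(\lambda\|r\|_2/\sqrt n)\sqrt{T_j}$ for $j\in S^c$; by convexity this forces every optimizer to vanish on $S^c$, so $\widehat\beta=\widetilde\beta$ and $\widehat\beta_{S^c}=0$. Writing $P:=X_S(X_S'X_S)^{-1}X_S'$ and inserting the identity above yields, for $j\in S^c$, the decomposition $(X^j)'r=(\lambda\|r\|_2/\sqrt n)(\Szet\Seem z_S)^j+\sigma(X^j)'(I-P)\epsilon$. The first (irrepresentable) term is bounded by $\eta(\lambda\|r\|_2/\sqrt n)\sqrt{T_j}$ via the GIR condition (\ref{eq.yoyo.groupirr}), since $\|z^k\|_2\leq\sqrt{T_k}$. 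The noise term I would split as $(X^j)'\epsilon-(\Szet\Seem X_S'\epsilon)^j$: the definition of $V$ gives $\|(X'\epsilon)^k\|_2\leq V\sqrt{T_k}\|\epsilon\|_2/\sqrt n$, and applying GIR a second time to the normalized vector $X_S'\epsilon$ bounds the whole noise term by $(1+\eta)\sigma V\sqrt{T_j}\|\epsilon\|_2/\sqrt n$. Strict feasibility then reduces to $V<\lambda\|r\|_2/(\widetilde\eta\,\sigma\|\epsilon\|_2)$. Finally, $(I-P)r=\sigma(I-P)\epsilon$, so $\|r\|_2\geq\sigma\|(I-P)\epsilon\|_2\geq\sigma\|\epsilon\|_2/2$ because $s^*$ is small relative to $n$ by (\ref{less}); combined with $V\leq\lambda/(2\widetilde\eta)$ on $\mathcal{A}_1$, the strict inequality closes (the strictness of GIR supplies the needed margin). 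The threshold $2\widetilde\eta$ in (\ref{a1}) is precisely what absorbs the factor-of-two loss in the residual.

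For part (2) I would take group-wise $\ell_\infty$ norms in the oracle identity. Since $\Seemt$ embeds $\Seem$ by padding with zeros on $S^c$, the constant $\xi_{\|\cdot\|_\infty}$ controls both pieces: applied to the normalized $X_S'\epsilon$ it bounds the noise contribution by $\xi_{\|\cdot\|_\infty}\sqrt{T_j}\,\sigma V\|\epsilon\|_2/(n\sqrt n)$, and applied to $z_S$ it bounds the penalty contribution by $\xi_{\|\cdot\|_\infty}\sqrt{T_j}\,\lambda\|r\|_2/(n\sqrt n)$; for $j\in S^c$ the bound is trivial since $\widehat\beta^j=\beta^{0j}=0$. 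On $\mathcal{A}_1$ one has $V\lesssim\lambda$, while $\|r\|_2\lesssim\sigma\|\epsilon\|_2$ (from optimality of $\widetilde\beta_S$ against the feasible $\beta^0_S$) and $\|\epsilon\|_2$ is of order $\sqrt n$; these collapse the bound to $D\sqrt{T_j}\sigma\lambda/n$, with $D$ absorbing $\xi_{\|\cdot\|_\infty}$ and the absolute constants. Part (3) is then immediate: part (1) gives $\widehat S\subseteq S$, while for $j\in S$ the assumption $m_j\geq D\sqrt{T_j}\sigma\lambda/n$ together with $\|\beta^{0j}\|_\infty\geq m_j$ and part (2) forces $\widehat\beta^j\neq0$ (were $\widehat\beta^j=0$, the error would be $\|\beta^{0j}\|_\infty\geq m_j$, violating part (2)); hence $\widehat S\supseteq S$ and $S=\widehat S$.

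The main obstacle is the dual-feasibility step of part (1): after the oracle substitution the residual produces the orthogonalized noise $(X^j)'(I-P)\epsilon$, whereas $V$ only controls the raw $(X^j)'\epsilon$, so one must route the extra projection term $(X^j)'P\epsilon$ through GIR a second time and then convert the ratio $\|r\|_2/(\sigma\|\epsilon\|_2)$ into a constant using the sparsity hypothesis (\ref{less}). Making the constants line up exactly with the threshold $2\widetilde\eta$ is the delicate point; the remainder is bookkeeping on the identity for $\widetilde\beta_S-\beta_S^0$.
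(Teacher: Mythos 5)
Your argument is a primal--dual witness construction: build the oracle $\widetilde\beta$ supported on $S$, certify strict dual feasibility on $S^c$, and read the $\ell_\infty$ bound off the identity for $\widetilde\beta_S-\beta^0_S$. The paper takes a genuinely different route: it applies the KKT conditions of Lemma~\ref{lemma.yoyo.kkt} directly to the actual estimator $\widehat\beta$, forms the quadratic $n^2\widehat\delta_{S^c}'(\Szz-\Sze\Seem\Sez)\widehat\delta_{S^c}$ by combining the $S$- and $S^c$-blocks of the stationarity equation, and shows via GIR that this quantity would be strictly negative if $\widehat\beta_{S^c}\neq 0$, contradicting positive semidefiniteness of the Schur complement. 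The paper's argument never needs an oracle problem or a uniqueness/strict-feasibility discussion, while yours localizes all the noise into the single orthogonalized term $(X^j)'(I-P)\epsilon$ and makes the role of $\eta$ and of the threshold $2\widetilde\eta$ in \eqref{a1} quite transparent; your decomposition of the noise as $(X^j)'\epsilon-(\Szet\Seem X_S'\epsilon)^j$, each piece controlled by $V$ and a second application of GIR, is correct and is essentially the same bookkeeping the paper performs on the vector $\tau_S-\tfrac{\sqrt n\sigma}{\lambda\widehat\psi}(X'\epsilon)_S$. Part (2) via $\xi_{\|\cdot\|_\infty}$ and part (3) match the paper.

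There is, however, one step that does not go through as written: the lower bound $\|r\|_2\geq\sigma\|(I-P)\epsilon\|_2\geq\sigma\|\epsilon\|_2/2$. The first inequality is fine, but $\|(I-P)\epsilon\|_2\geq\|\epsilon\|_2/2$ is a statement about the Gaussian vector $\epsilon$ and the $s^*$-dimensional projection $P$; it is \emph{not} implied by the deterministic sparsity condition \eqref{less}, nor by membership in $\mathcal{A}_1$ or $\mathcal{B}$. You need an additional high-probability event controlling $\|P\epsilon\|_2^2$ (a $\chi^2_{s^*}$ variable) against $\|\epsilon\|_2^2$, which must then be folded into the probability budget of the theorem. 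The paper avoids this by proving Lemma~\ref{lemma.yoyo.tech}: the triangle inequality $\|Y-X\widehat\beta\|_2\geq\|\sigma\epsilon\|_2-\|X(\widehat\beta-\beta^0)\|_2$ together with the prediction bound of Theorem~\ref{thm.yoyo.pred} gives $\|r\|_2\geq(1-\tfrac{\lambda\sqrt{s^*}u}{n\kappa})\|\sigma\epsilon\|_2$ on $\mathcal{A}\cap\mathcal{B}$, with no new randomness. The same lemma is what you need (and currently only gesture at via ``optimality against $\beta^0_S$'') for the upper bound $\|r\|_2\lesssim\sigma\|\epsilon\|_2$ used in your part (2). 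Substituting this deterministic residual control for the $(I-P)\epsilon$ argument closes the gap and leaves the rest of your proof intact.
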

\begin{remark}\textnormal{The constant $D$  depends on $\gamma, \eta, \kappa$ and $\xi_{\|\cdot\|_\infty}$, but not on $n, p, q$.  Its exact form is given in the proof of  Theorem \ref{thm.yoyo.varsel}. The results above show that the Group Square Root Lasso will recover the sparsity pattern consistently, as long as $\mathcal{A}_1$ has high probability, which we show in Lemma \ref{lambdagauss} below. Theorem   \ref{thm.yoyo.varsel}  holds under slightly more general  conditions on the design  than the variant on  the mutual coherence condition employed in Lounici et al. \cite{lounici11}, for pattern recovery with the Group Lasso.  Moreover, the recovery is guaranteed for signals of minimal strength,  just above noise level,  which we quantify precisely  in Corollary \ref{sum}  below.  }
\end{remark}

\begin{remark}\textnormal{
Theorem \ref{thm.yoyo.varsel} can be proved only under  GIR and BM, as GIR  implies CC.  However, using only GIR would require the derivation of the corresponding constants under which CC holds, as we will appeal to the conclusion
of Theorem \ref{thm.yoyo.pred}  in the course of the proof of Theorem  \ref{thm.yoyo.varsel}. Given that the arguments are already technical, we opted for stating both assumptions separately, for transparency. }
\end{remark}

\begin{remark}\label{lemma.yoyo.vs3} \textnormal{The Group Square-Root Lasso can be shown to lead to  correct subset recovery under sharper  Beta Min Conditions, for a constant $D$  independent of  $\xi_{\|\cdot\|_\infty}$, if we impose stricter conditions on the design. For example, one can invoke the  Group Mutual Coherence Condition (GMC) and apply ideas developed in \cite{BuneaEN} to find the condition $m_j\gtrsim\sqrt{T_j}\lambda /n$, which is of the same order as above,  but holds up to universal constants, independent of the conditions on the design. We do not detail this approach here, since the GMC  implies GIR,  and the proof would follow very closely the ideas in \cite{BuneaEN}.}
\end{remark}

\subsection{Choice of the Tuning Parameter}
\label{sec.tuningparameter}
As discussed above, the novel  property of the Group Square-Root Lasso  method is that its tuning parameter $\lambda$ can be chosen independently of the noise level $\sigma$. This is particularly interesting in the high-dimensional setting $p\gg n$, where good estimates of $\sigma$ are not usually available.  In determining  $\lambda$ for this method, we recall that it has to be  sufficiently large to overrule the noise component, which is independent of $\sigma$, \eqn{
V=\max_{1\leq j \leq q}\left\{\frac{\sqrt n\|(X'\epsilon)^j\|_2}{\sqrt{T_j}\|\epsilon\|_2}\right\},
}
in that the events $\mathcal{A}$ and $\mathcal{A}_1$, given above by (\ref{a}) and (\ref{a1}), respectively, hold with high probability.  At the same time,  the bounds in Theorem \ref{thm.yoyo.varsel} and \ref{thm.yoyo.pred} become sharper
for smaller values of $\lambda$. To incorporate these two constraints, we choose
the tuning parameter as the smallest value that overrules the noise part
with high probability.  For this, we fix $\alpha\in(0,1)$ and choose the
smallest value $\lambda$ such that with probability at least $1-\alpha$ it
still holds that $\lambda/\overline\gamma\geq V$ or
$\lambda/(\overline\gamma\vee 2\widetilde\eta)\geq V$,  depending on the
type of results we are interested in. Standard values for $\alpha$ are
$0.05$ and $0.01$. \\

\noindent For each $j$, let  $\zeta_j= \|X^j\|^2 /n$ and $\zeta =\max_{j} \zeta_j$, where $\| A \|$ is the operator norm of a generic matrix $A$.

\begin{lemma}\label{lambdagauss}  Assume that the noise terms $\epsilon_i$,  $1 \leq i \leq n$,  are i.i.d.   standard  Gaussian random variables,  and assume that $T_j < n$, for all $ 1 \leq j \leq q$.  Let $\alpha \in (0, 1)$ be given such that $16\log(2q/\alpha)\leq n - T_{\text{max}}$. Then, if \[ \lambda_0 \geq \frac{\sqrt{2\zeta}n}{\sqrt{n-T_\text{max}}}\left(1+\sqrt{\frac{2\log(2q/\alpha)}{T_\text{min}}}\right),\]
 it holds that
\eqn{
\mpr(V\geq \lambda_0)\leq \alpha.
}
\end{lemma}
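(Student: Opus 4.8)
### Proof Plan for Lemma \ref{lambdagauss}

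The plan is to control the maximum over $j$ of the ratio
$\sqrt{n}\|(X'\epsilon)^j\|_2/(\sqrt{T_j}\|\epsilon\|_2)$ by controlling the
numerator and denominator separately, using Gaussian concentration, and then
taking a union bound over the $q$ groups. First I would observe that since
$\epsilon$ is standard Gaussian and $X^j\in\mr^{n\times T_j}$, the vector
$(X'\epsilon)^j=(X^j)'\epsilon$ is a centered Gaussian vector in $\mr^{T_j}$ with
covariance $(X^j)'X^j$. The key is to bound its Euclidean norm from above and
the noise norm $\|\epsilon\|_2$ from below, so that the event $\{V\geq\lambda_0\}$
is contained in a union of failures of these two controls.

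The main technical step is a high-probability upper bound on
$\|(X^j)'\epsilon\|_2$. I would write
$\|(X^j)'\epsilon\|_2=\sup_{\|u\|_2=1}\langle (X^j)'\epsilon,u\rangle
=\sup_{\|u\|_2=1}\langle\epsilon,X^j u\rangle$, which is the supremum of a
Gaussian process indexed by the unit sphere in $\mr^{T_j}$. The map
$\epsilon\mapsto\|(X^j)'\epsilon\|_2$ is Lipschitz with constant
$\|X^j\|=\sqrt{n\zeta_j}\leq\sqrt{n\zeta}$, so Gaussian concentration gives a
tail bound governed by this Lipschitz constant. For the expectation, I would use
$\me\|(X^j)'\epsilon\|_2\leq(\me\|(X^j)'\epsilon\|_2^2)^{1/2}
=(\tr((X^j)'X^j))^{1/2}=(\sum_{l\in G_j}\|X^l_{\mathrm{col}}\|_2^2)^{1/2}$, which
equals $\sqrt{nT_j}$ under the normalization that the diagonal of $\Sigma$ is $1$.
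Combining the Lipschitz tail bound with this mean estimate yields, for each $j$,
\eq{\label{eq.numbound}
\|(X^j)'\epsilon\|_2\leq\sqrt{nT_j}+\sqrt{2n\zeta\log(2q/\alpha)}
}
with probability at least $1-\alpha/(2q)$; a union bound makes this simultaneous
over all $q$ groups with probability at least $1-\alpha/2$.

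For the denominator, I would lower-bound $\|\epsilon\|_2^2$, a chi-squared random
variable with $n$ degrees of freedom. The reason the statement features
$n-T_{\max}$ rather than $n$ is that the proof must accommodate the worst group,
so I expect a bound of the form $\|\epsilon\|_2^2\geq n-T_{\max}$ (or a constant
multiple thereof) to be extracted via a chi-squared lower tail inequality, valid
because the hypothesis $16\log(2q/\alpha)\leq n-T_{\max}$ keeps the deviation term
below the mean; this event holds with probability at least $1-\alpha/2$. Finally
I would divide: on the intersection of the two good events (probability at least
$1-\alpha$), for every $j$,
\eqn{
\frac{\sqrt{n}\|(X^j)'\epsilon\|_2}{\sqrt{T_j}\|\epsilon\|_2}
\leq\frac{\sqrt{n}\bigl(\sqrt{nT_j}+\sqrt{2n\zeta\log(2q/\alpha)}\bigr)}
{\sqrt{T_j}\sqrt{n-T_{\max}}}
\leq\frac{\sqrt{2\zeta}\,n}{\sqrt{n-T_{\max}}}
\left(1+\sqrt{\frac{2\log(2q/\alpha)}{T_{\min}}}\right),
}
where I have used $T_j\geq T_{\min}$ and $\zeta\geq 1$ (the latter from the
normalization, so that $\sqrt{nT_j}\leq\sqrt{2\zeta}\,n$ after matching terms).
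This is exactly $\lambda_0$, giving $\mpr(V\geq\lambda_0)\leq\alpha$.

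The main obstacle I anticipate is calibrating the two concentration bounds so
that the constants assemble cleanly into the stated $\lambda_0$: in particular,
tracking the factor $\sqrt{2\zeta}$ and verifying that the crude bound
$\sqrt{nT_j}\leq\sqrt{2\zeta}\,n\cdot\sqrt{T_j/(2\zeta n)}$ absorbs correctly into
the leading term, and ensuring the chi-squared lower tail yields precisely
$n-T_{\max}$ under the hypothesis $16\log(2q/\alpha)\leq n-T_{\max}$. The Gaussian
and chi-squared tail inequalities themselves are standard, so the difficulty is
bookkeeping rather than conceptual.
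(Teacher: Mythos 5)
Your argument is correct in substance but follows a genuinely different route from the paper's. The paper does not separate numerator and denominator: it rewrites $\{V\geq\lambda_0\}$ for each $j$ as the event that the quadratic form $\epsilon'\bigl(\tfrac{X^j(X^j)'}{n}-\tfrac{\lambda_0^2T_j}{n^2}I\bigr)\epsilon$ is nonnegative, diagonalizes $X^j(X^j)'/n$ (rank at most $T_j$, top eigenvalue $\zeta_j$), and thereby reduces the event to a comparison of two \emph{independent} chi-squared variables with $T_j$ and $n-T_j$ degrees of freedom, each controlled by the Laurent--Massart inequalities. This exploitation of the exact joint law is what makes $n-T_j$ (hence $n-T_{\max}$) appear intrinsically, and it is also what the authors later leverage in Section 4 to replace the deviation bounds by exact $F$-quantiles for the practical tuning rule. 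Your route --- Borell--Cirelson concentration for the Lipschitz map $\epsilon\mapsto\|(X^j)'\epsilon\|_2$ with mean bounded by $\sqrt{\tr((X^j)'X^j)}=\sqrt{nT_j}$, plus a chi-squared lower tail for $\|\epsilon\|_2^2$, plus a union bound --- is more modular and uses only off-the-shelf tools, at the price of slightly looser constants and of discarding the self-normalized structure that yields the $F$-distribution refinement.

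One step needs the care you yourself flagged: the literal bound $\|\epsilon\|_2^2\geq n-T_{\max}$ does \emph{not} hold with probability $1-\alpha/2$ when $T_{\max}$ is small (for $T_{\max}=1$ that probability is near $1/2$). What you can get from the lower tail $\mpr(\chi^2_n\leq n-nt)\leq e^{-nt^2/4}$ together with the hypothesis $16\log(2q/\alpha)\leq n-T_{\max}\leq n$ is $\|\epsilon\|_2^2\geq n/2\geq (n-T_{\max})/2$ with probability at least $1-\alpha/(2q)$. The resulting extra factor $\sqrt{2}$ in the denominator is then absorbed because $\zeta\geq 1$ (the normalization forces $\|X^j\|^2\geq n$), so that $\sqrt{2}\leq\sqrt{2\zeta}$ matches the leading constant in $\lambda_0$ and $\sqrt{2}\cdot\sqrt{2\zeta\log(2q/\alpha)/T_{\min}}=\sqrt{2\zeta}\cdot\sqrt{2\log(2q/\alpha)/T_{\min}}$ matches the deviation term exactly. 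With that substitution your chain of inequalities closes and gives $\mpr(V\geq\lambda_0)\leq\alpha$ as claimed.
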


\noindent  As an immediate consequence, the following corollary summarizes the expressions of $\lambda$ for which the
events $\mathcal{A}$ and $\mathcal{A}_1$ hold with probability $1 - \alpha$, for each given $\alpha$. Notice that $\lambda$ is independent of $\sigma$, as claimed. Corollary \ref{sum} also shows that the  sharp rates of convergence and subset recovery properties of  the Group Lasso are also enjoyed by the Group Square-Root Lasso, with the important added benefit that  the new method's tuning parameter is $\sigma$-free.

\begin{corollary} \label{sum}
 Assume that the noise terms $\epsilon_i$, $1 \leq i \leq n$,  are i.i.d.   standard Gaussian random variables and assume that $T_j < n$, for all $ 1 \leq j \leq q$.  Let $\alpha \in (0, 1)$ be given such that $16\log(2q/\alpha)\leq n - T_{\text{max}}$.\\

\noindent  (i) If
  \begin{equation*}
 \lambda \geq \frac{\sqrt{2\zeta}n\overline\gamma}{\sqrt{n-T_\text{max}}}\left(1+\sqrt{\frac{2\log(2q/\alpha)}{T_\text{min}}}\right),
  \end{equation*}
then $\mpr\left(\mathcal {A} \right)\geq 1-\alpha$. \\
\noindent (ii) If
  \begin{equation*}
 \lambda\geq \frac{\sqrt{2\zeta}n(\overline\gamma\vee 2\widetilde\eta)}{\sqrt{n-T_\text{max}}}\left(1+\sqrt{\frac{2\log(2q/\alpha)}{T_\text{min}}}\right),      
\end{equation*}
then  $\mpr\left(\mathcal A_1 \right)\geq 1-\alpha$.\\
\noindent (iii) Under the assumptions of  Theorem ~\ref{thm.yoyo.pred}, its conclusion holds  with probability at least $1 - 2\alpha$ and $ \lambda = O(\sqrt{\frac{ n}{T_\text{min}}} \log q ).$ \\
\noindent (iv) Under the assumptions of  Theorem ~\ref{thm.yoyo.varsel}, its conclusion holds  with probability at least $1 - 2\alpha$ and $ \lambda = O(\sqrt{\frac{n}{T_\text{min}}}\log q ).$ \\
\end{corollary}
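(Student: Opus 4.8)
The plan is to read Corollary \ref{sum} as a repackaging of Lemma \ref{lambdagauss} together with Theorems \ref{thm.yoyo.pred} and \ref{thm.yoyo.varsel}: the substantive content is already contained in those results, and what remains is to match the threshold of Lemma \ref{lambdagauss} to the thresholds defining $\mathcal{A}$ and $\mathcal{A}_1$ and then to account for the probabilities by a union bound. A pleasant feature is that, since $V$ is a ratio in $\epsilon$ and hence scale-invariant, none of the thresholds involve $\sigma$, which is what makes the resulting $\lambda$ in each part $\sigma$-free.

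For part (i) I would apply Lemma \ref{lambdagauss} with $\lambda_0:=\lambda/\overline\gamma$. The hypothesis imposed on $\lambda$ in (i) is exactly the statement that this $\lambda_0$ meets the lower bound required by Lemma \ref{lambdagauss}, so the lemma gives $\mpr(V\geq \lambda/\overline\gamma)\leq\alpha$, and hence $\mpr(\mathcal A)=\mpr(V\leq\lambda/\overline\gamma)\geq 1-\alpha$ (strict versus non-strict inequality is immaterial, as $V$ is a continuous random variable). Part (ii) is identical after replacing $\lambda_0$ by $\lambda/(\overline\gamma\vee 2\widetilde\eta)$: the lower bound on $\lambda$ displayed in (ii) is precisely what forces this $\lambda_0$ above the Lemma \ref{lambdagauss} threshold, yielding $\mpr(\mathcal A_1)\geq 1-\alpha$.

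For part (iii) I would combine part (i) with Theorem \ref{thm.yoyo.pred}, which holds on $\mathcal A$, while part (i) gives $\mpr(\mathcal A)\geq 1-\alpha$ for the stated $\lambda$. To present the bounds with the clean factor $\sigma$, rather than with the empirical noise scale $\sigma\|\epsilon\|_2/\sqrt n$ that arises in the derivation, I would intersect $\mathcal A$ with a standard chi-squared deviation event of the form $\{\|\epsilon\|_2^2\leq cn\}$, which for fixed $\alpha$ and large $n$ has probability at least $1-\alpha$; a single union bound then gives the two conclusions of Theorem \ref{thm.yoyo.pred} with probability at least $1-2\alpha$. Part (iv) is analogous, but here the second $\alpha$ is read off directly from Theorem \ref{thm.yoyo.varsel}, whose assertions (1)--(3) hold, on $\mathcal A_1$, only on a further event of probability at least $1-\alpha$; intersecting that event with $\mathcal A_1$ from part (ii) and union bounding again yields probability at least $1-2\alpha$.

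Finally I would read off the order of $\lambda$. Holding $\kappa,\gamma,\eta,\zeta$ and $\alpha$ fixed and working in the regime where $T_{\max}$ is bounded away from $n$ (so that $n-T_{\max}=\Theta(n)$, consistent with the standing requirement $16\log(2q/\alpha)\leq n-T_{\max}$), the prefactor obeys $\frac{n}{\sqrt{n-T_{\max}}}=O(\sqrt n)$, while inside the bracket the term $\sqrt{2\log(2q/\alpha)/T_{\min}}=O(\sqrt{\log q/T_{\min}})$ dominates the constant $1$ for large $q$. Multiplying gives $\lambda=O(\sqrt{n/T_{\min}}\,\sqrt{\log q})$, and since $\sqrt{\log q}\leq\log q$ for $q\geq 3$ this is $O(\sqrt{n/T_{\min}}\,\log q)$, as claimed in both (iii) and (iv). The two points that must be handled with care are (a) the role of the factor $\sigma$ in Theorem \ref{thm.yoyo.pred}: if that factor is justified deterministically on $\mathcal A$, then part (iii) in fact holds with the sharper probability $1-\alpha$ and the stated $1-2\alpha$ is satisfied a fortiori, whereas if one passes from $\sigma\|\epsilon\|_2/\sqrt n$ to $\sigma$ via a chi-squared event, the $2\alpha$ is exactly the union-bound cost; and (b) the reduction $\frac{n}{\sqrt{n-T_{\max}}}=O(\sqrt n)$, which is legitimate only when $T_{\max}$ stays bounded away from $n$; if $T_{\max}$ is permitted to approach $n$, the clean form must be replaced by $O\bigl(\tfrac{n}{\sqrt{(n-T_{\max})T_{\min}}}\log q\bigr)$. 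Neither is a deep obstacle, but both are needed for the corollary to hold exactly as advertised.
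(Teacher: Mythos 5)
Your proposal is correct and follows essentially the same route as the paper, which disposes of (i) and (ii) by applying Lemma \ref{lambdagauss} with $\lambda_0=\lambda/\overline\gamma$ and $\lambda_0=\lambda/(\overline\gamma\vee 2\widetilde\eta)$ respectively, and obtains (iii) and (iv) by intersecting $\mathcal A$ (resp.\ $\mathcal A_1$) with the chi-squared event $\mathcal B$ of Lemma \ref{setb} via a union bound, exactly as you describe. Your two cautionary remarks (the source of the second $\alpha$, and the requirement that $n-T_{\max}=\Theta(n)$ for the clean order of $\lambda$) are consistent with how the paper's proofs of Theorems \ref{thm.yoyo.pred} and \ref{thm.yoyo.varsel} actually use $\mathcal B$ and with the standing assumption $16\log(2q/\alpha)\leq n-T_{\max}$.
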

\noindent The first two claims follow immediately from Lemma \ref{lambdagauss} and the definitions of $\mathcal{A}$ and $\mathcal{A}_1$, respectively. The  third and forth claims follow directly from the first two,  by invoking Theorems ~\ref{thm.yoyo.pred} and  ~\ref{thm.yoyo.varsel}, respectively.
We only considered Gaussian noise above for clarity of exposition. However, more general results can be established applying different deviation inequalities, for instance \cite{Bousquet02, YoyoSara12, vdGeer11b}.  For example, if the $\epsilon_i$'s belong to a general sub-exponential family, the order of magnitude of $\lambda$ remains the same. We also refer to~\cite{Bellarx}, where the analysis involving non-Gaussian noise  makes use of  moderate deviation theory for self-normalized sums,  leading in some cases to results similar to those obtained for Gaussian noise. Additionally,  an analysis that takes into account correlations between the groups is  expected to lead to  results similar to  those established for the Lasso, see \cite{MomoYoyo12, vdGeer11}.

\section{Computational Algorithm}
\label{sec:comp}
%

In this section we show that  the Group Square-Root Lasso can be implemented very efficiently.  We consider estimators of a  form slightly more general than (\ref{eq.yoyo.estimator}):
\begin{align}
\hat \beta := \argmin_{\beta\in\mr^p}\left\{ \|Y - X \beta\|_2 + \sum_{j=1}^q \lambda_j \|\beta^j\|_2\right\}, \label{sqrt-lasso}
\end{align}
where $\lambda_1,\dots,\lambda_q>0$ are arbitrary given constants.
For convenience, we will implement, without loss of generality, the following variant
\begin{align}
\hat \beta  := \argmin_{\beta\in\mr^p}\left\{ \|Y - X \beta\|_2/K + \sum_{j=1}^q \lambda_j \|\beta^j\|_2\right\}, \label{sqrt-lasso2}
\end{align}
where $K$  is a   fixed, sufficiently large constant.
A global minimum of \eqref{sqrt-lasso2}, for given constants
$\lambda_1,\dots,\lambda_q$,  is also a global minimum of \eqref{sqrt-lasso}
with constants $K\lambda_1,\dots,K\lambda_q$.\\ 

When $q=p$ and $\lambda_1=\dots=\lambda_q=\lambda$, \eqref{sqrt-lasso2} reduces to the Square-Root Lasso, which was
formulated  in the  form \cite{Belloni11}:
\begin{align}
\min_{t, v, \beta^+, \beta^-} &\frac{t}{K} + \lambda \sum_{j=1}^p (\beta^{j+} + \beta^{j-}) \notag\\\quad & s.t. \quad
v_i = Y_i - x_i^T \beta^+ + x_i^T \beta^-,~ 1\leq i \leq n, ~t \geq \|v\|_2,~ \beta^+\geq 0,~ \beta^-\geq 0.
\end{align}
The last three constraints are second-order cone constraints.
Based on this conic formulation, Belloni et al.  \cite{Belloni11},  have derived three computational algorithms for solving the Square-Root Lasso:
\begin{enumerate}
\item First order methods by calling the TFOCS Matlab package, or TFOCS for short;\item Interior point method by calling the SDPT3 Matlab package, or IPM for short;
\item Coordinatewise optimization, or COORD for short.
\end{enumerate}
According to our experience, TFOCS is very slow and inaccurate.  COORD is reasonably
fast,  but not as accurate as IPM, especially in applications with a large
number of parameters $p$. In computing a solution path,  COORD is still much slower  than the,  perhaps most popular,  coordinate descent algorithm  for solving the Lasso  \cite{FHHT07}.
Therefore, even for the Square-Root Lasso, without groups, a fast and accurate algorithm  is still needed.\\

We propose a  scaled thresholding-based iterative selection procedure (\textbf{S-TISP}) for solving the general Group Square-Root Lasso problem \eqref{sqrt-lasso2}.
Assume  the scaling step
\begin{align}
Y \leftarrow Y / K, \quad X \leftarrow X/K \label{scaling}
\end{align}
has been performed. Starting from an arbitrary $\beta{(0)}\in \mathbb R^p$,
S-TISP performs  the following iterations to update $\beta(t)$, $t=0, 1,
\dots$:
\begin{align}
\beta^j (t+1) = \vec\Theta( \beta^j {(t)} + (X^j)' (Y -    X \beta {(t)}) ; \lambda_j \| X \beta {(t)} - Y\|_2 ), \quad 1\leq j \leq q. \label{tisp-iter}
\end{align}
Here, $\vec\Theta$ is the multivariate soft-thresholding operator
\cite{She12} defined through $\vec\Theta(0;\lambda):=0$ and $\vec\Theta(a;\lambda):=
a \Theta(\|a\|_2;\lambda)/\|a\|_2$ for $a\neq  0$, where
$\Theta(t;\lambda):=\mbox{sign}(t) (|t|-\lambda)_+$ is the
soft-thresholding rule. S-TISP is extremely  simple to implement and does
not resort to any optimization packages.\\

The following theorem guarantees   the global convergence of $\beta{(t)}$.
The result is   considerably stronger  than  those `every accumulation point'-type
conclusions that are often seen in numerical analysis. 
\begin{theorem} \label{th:stisp-conv}
Suppose $\lambda_j>0$ and the following regularity condition holds:
$\inf_{\xi \in A} \|X \xi - Y \|_2 > 0$, where $A= \{\vartheta \beta{(t)}+(1-\vartheta)\beta{(t+1)}:\vartheta  \in [0, 1], t=0, 1, \dots\}$.
Then, for $K$ large enough, the sequence of iterates $\beta{(t)}$ generated
by \eqref{tisp-iter} starting with any $\beta^{(0)}$ converges to a global minimum  of \eqref{sqrt-lasso2}.
\end{theorem}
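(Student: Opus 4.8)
The plan is to read \eqref{tisp-iter} as a majorization--minimization (MM) scheme for the convex objective $F(\beta):=\|Y-X\beta\|_2+\sum_{j=1}^q\lambda_j\|\beta^j\|_2$, written in the scaled data so that minimizing $F$ is exactly solving \eqref{sqrt-lasso2}, and then to exploit convexity of $F$ to identify fixed points with global minima. First I would build the surrogate in two layers. Writing $r(t):=\|X\beta(t)-Y\|_2$, the elementary inequality $\sqrt a\le\tfrac{a+a_0}{2\sqrt{a_0}}$ applied to $a=\|Y-X\beta\|_2^2$ and $a_0=r(t)^2$ majorizes the square-root loss by the quadratic $\tfrac{\|Y-X\beta\|_2^2}{2r(t)}+\tfrac{r(t)}{2}$, with equality at $\beta=\beta(t)$; adding the unchanged group penalty yields a Group-Lasso surrogate $G(\cdot\mid\beta(t))\ge F$. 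Linearizing the quadratic loss of $G$ at $\beta(t)$ and adding the proximal term $\tfrac{1}{2r(t)}\|\beta-\beta(t)\|_2^2$ produces a second surrogate $\widetilde G(\cdot\mid\beta(t))$ that is separable across groups, whose minimizer is precisely the group soft-thresholding step \eqref{tisp-iter}: the gradient contribution $r(t)\,\nabla\big(\tfrac{1}{2r(t)}\|Y-X\beta\|_2^2\big)$ evaluated at $\beta(t)$ equals $X'(X\beta(t)-Y)$ and is independent of $r(t)$, giving the forward step $\beta(t)+X'(Y-X\beta(t))$, while the induced threshold is $r(t)\lambda_j=\lambda_j\|X\beta(t)-Y\|_2$. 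The second majorization is valid provided the step size $r(t)$ is at most $r(t)/\|X\|^2$, i.e.\ $\|X\|^2\le1$ for the scaled design, which is exactly what ``$K$ large enough'' guarantees.

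Second, I would record the consequences of the double majorization $F\le G(\cdot\mid\beta(t))\le\widetilde G(\cdot\mid\beta(t))$, with equality at $\beta(t)$. Since $\beta(t+1)$ minimizes the strongly convex $\widetilde G(\cdot\mid\beta(t))$ (modulus $1/r(t)$), I obtain both monotone descent $F(\beta(t+1))\le F(\beta(t))$ and the sufficient-decrease bound $F(\beta(t))-F(\beta(t+1))\ge\tfrac{1}{2r(t)}\|\beta(t+1)-\beta(t)\|_2^2$. The regularity assumption $\inf_{\xi\in A}\|X\xi-Y\|_2>0$ keeps $r(t)$ bounded away from $0$, so $\sqrt{\cdot}$ is differentiable along the iterates and the surrogates are well defined; coercivity of $F$ (from $F(\beta)\ge(\min_j\lambda_j)\|\beta\|_2$) confines the iterates to a bounded sublevel set, which also bounds $r(t)$ above by some $r_{\max}$. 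Telescoping the sufficient-decrease bound then forces $\sum_t\|\beta(t+1)-\beta(t)\|_2^2<\infty$, hence $\|\beta(t+1)-\beta(t)\|_2\to0$, and the bounded sequence has accumulation points.

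Third, I would show every accumulation point $\beta^\infty$ is a global minimizer. Passing to the limit along a convergent subsequence in \eqref{tisp-iter}, using $\|\beta(t+1)-\beta(t)\|_2\to0$, continuity of the update map, and $\|X\beta^\infty-Y\|_2>0$, shows $\beta^\infty$ is a fixed point. The fixed-point relation $(X^j)'(Y-X\beta^\infty)\in\lambda_j\|Y-X\beta^\infty\|_2\,\partial\|(\beta^\infty)^j\|_2$, divided by $\|Y-X\beta^\infty\|_2$, is exactly the subgradient optimality condition $0\in\nabla\|Y-X\beta^\infty\|_2+\partial\big(\sum_j\lambda_j\|\cdot^j\|_2\big)(\beta^\infty)$; since $F$ is convex, this certifies $\beta^\infty\in\argmin F$.

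Finally, the main obstacle is upgrading ``every accumulation point is a global minimizer'' to convergence of the \emph{entire} sequence, which is the content the remark flags as stronger than usual. I would obtain this either by a Fej\'er-type argument (the $r(t)$-independent forward map $\beta\mapsto(I-X'X)\beta+X'Y$ is nonexpansive once $\|X\|^2\le2$, soft-thresholding is nonexpansive, and $r(t)$ converges to a common residual level on the convex solution set) or, more cleanly, by the Kurdyka--{\L}ojasiewicz (KL) route. For the latter, the sufficient-decrease bound of Step~2 together with a relative-error bound $\mathrm{dist}\big(0,\partial F(\beta(t+1))\big)\le C\|\beta(t+1)-\beta(t)\|_2$ --- which follows from the optimality of $\beta(t+1)$ for $\widetilde G$ after controlling the denominators $r(t),r(t+1)\ge r_*>0$ and using that $\beta\mapsto X\beta$ is Lipschitz --- verifies the hypotheses of the abstract KL descent theorem; because $F$ is convex (so its only critical points are global minima) and semialgebraic (so the KL property holds), the full sequence converges to a single point of $\argmin F$. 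The delicate points are the uniform lower bound on $r(t)$, supplied by the regularity condition, and checking the relative-error inequality, since the effective regularization level $r(t)$ changes from step to step.
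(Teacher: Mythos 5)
Your proposal is correct and its core coincides with the paper's argument: the paper also introduces a surrogate $G(\beta,\gamma)$ equal to your linearized--proximal $\widetilde G$, identifies its group-wise minimizer with the thresholding step \eqref{tisp-iter}, derives the sufficient-decrease inequality \eqref{errbnd}, concludes boundedness and asymptotic regularity $\|\beta(t+1)-\beta(t)\|_2\to0$ under the regularity condition, and certifies global optimality of the limit through the KKT conditions of Lemma \ref{lemma.yoyo.kkt}. Two points differ. First, you obtain $F\le\widetilde G$ as a genuine global majorization via the concavity bound $\sqrt a\le(a+a_0)/(2\sqrt{a_0})$ together with $\|X\|\le1$ after scaling, whereas the paper uses a second-order Taylor expansion with a mean-value point $\xi(t)$ on the segment and only needs $\tfrac{2}{\|X\beta(t)-Y\|_2}\ge\tfrac{\|X\|^2}{\|X\xi(t)-Y\|_2}$; your version is cleaner and makes the role of $K$ transparent, at the cost of a slightly more conservative scaling (the paper's recommended $K=\|X\|/\sqrt2$ gives $\|X\|^2=2$ after scaling). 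Second, for upgrading to convergence of the whole sequence the paper proves a nonexpansiveness lemma for the multivariate soft-thresholding operator and invokes Opial's conditions, which is essentially your Fej\'er option; note, however, that the threshold $\lambda_j\|X\beta(t)-Y\|_2$ varies with the iterate, so the update is not literally iteration of a single fixed nonexpansive map, and both your Fej\'er sketch and the paper's Opial step need the extra care you flag (controlling the drift of the threshold via $r_*\le r(t)\le r_{\max}$). Your KL alternative is a genuinely different route: since $F$ is convex, coercive, and semialgebraic, and your sufficient-decrease plus relative-error bounds hold once $r(t)$ is bounded away from $0$ and $\infty$, the Attouch--Bolte--Svaiter theorem delivers single-point convergence without any nonexpansiveness claim, which arguably makes it the more airtight of the two finishes.
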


According to our experience, smaller values of $K$
lead to faster convergence if the algorithm converges. The choice $K = \| X \|/\sqrt 2$,
motivated by display \eqref{errbnd} in the proof of Theorem \ref{th:stisp-conv}, works well  in the
simulation studies; we recall that $\|X\|$ is the operator norm of the matrix $X$.  The associated objective function
 is  $ \| Y - X
\beta\|_2 + \sum_j  K \cdot \lambda_j \| \beta^j \|_2$ which reduces to the specific form  (\ref{eq.yoyo.estimator}) if  we set \[\lambda_j =  \frac{\lambda}{K} \cdot \sqrt{\frac{T_j}{n}}.\]
Other choices of $\lambda_j$ are allowable  in our computational algorithm.
 We
suggest using \emph{warm starts} so that the convexity of the problem can
be well exploited. Concretely, after specifying a decreasing grid for $\lambda$, denoted by $\Lambda = \{\lambda_1, \cdots, \lambda_l\}$, we use the converged solution $\hat \beta_{\lambda_l}$  as the initial point $\beta{(0)}$ in \eqref{tisp-iter} for the new optimization problem associated with $\lambda_{l+1}$.


\section{Simulations}

\subsection{Computational Time Comparison for the Square-Root Lasso}

As explained  in Section \ref{sec:comp} above, the Square-Root Lasso can be
computed using one of the algorithms TFOCS, COORD, or IPM  \cite{Belloni11}. As the Square-Root
Lasso is a special case of the Group Square-Root Lasso (GSRL), corresponding to  $q=p$,  it can also be implemented via our
proposed S-TISP algorithm.  In this section we compare the three existing methods with ours in terms of computational time.
 We are particularly interested in high-dimensional, sparse problems, when  $p$ is
large and $\beta^0$ is sparse.  Since no competing GSRL algorithms exist,  we only consider the non-grouped version of  S-TISP in the experiments below, for transparent comparison with published literature on algorithms for  the Square-Root  Lasso, which  is only devoted to variable selection, and not to group selection. \\


For uniformity of comparison, we used a Toeplitz design as in Belloni et al.  \cite{Belloni11} with
correlation matrix $[0.5^{|i-j|}]_{p\times p}$. The noise variance is fixed
at $1$ and the true signal is  the $p$-dimensional, sparse vector  $\beta^0 = \begin{array}{ccccccc} ( 2.5 & 0 & 2.5 & 2.5 & 0 & \cdots & 0 )' \end{array}$.  The first four components of $\beta^0$ are fixed. The rest are all equal to zero, and their number
varies   as we vary the dimension of $\beta^0$ by setting  $p=25,50,100,200,500,1000$, in order  to investigate the computational scalability of each of the algorithms under consideration.  We set $n=50$ for all values of $p$.
We perform the following computations: \\
(i) \emph{PATH.} Solution paths are computed for $\lambda /(\sqrt n K)=2^{-6}, 2^{-5.8}, \cdots, 2^{-0.2}, 2^0$. This grid is empirically constructed  to cover  all potentially interesting solutions as  $p$ varies.
\\
(ii) \emph{TH.} We use the  theoretical choice $\lambda/\sqrt n=1.1 \Phi^{-1}(1-0.05/(2p))$ recommended in  \cite{Belloni11}   to compute a specific coefficient estimate. In both cases, the error tolerance is  1e-6.  Each experiment is repeated 50 times, and we report the average CPU time.

We used the Matlab codes downloaded from Belloni's website and installed
some further required Matlab packages,  with necessary changes to rescale
$\lambda$. We  made consistent termination criteria, and suppressed the
outputs. In particular, we implemented the warm start initiation in COORD
which boosts its convergence substantially. The original initialization in
the COORD relies on a ridge regression estimate and is slow in computing a
solution path. Table \ref{tab1} shows the average computational time for 50
runs of each of the algorithms  under comparison.

\begin{table}[ht!]
\centering
\setlength{\tabcolsep}{1mm}
\caption{\small{Computational time comparison (CPU time in seconds) of the first order method by calling the TFOCS  package, the interior point method (IPM) based on SDPT3, the coordinatewise optimization (COORD), and the S-TISP.} }\label{tab1}

\small{
\begin{tabular}{r | rrrrrr}
\hline\hline
\emph{PATH} & $p=25$ & $p=50$ & $p=100$ & $p=200$ & $p=500$ & $p=1000$ \\
\hline
S-TISP & 0.09 &   0.34  &  0.64 & 0.77 &  1.28 &  3.42\\
COORD & 0.24  &  0.67 &   0.68  &  0.69  &  2.37 &  32.13\\
IPM &  3.84 & 4.42 &  4.99 &  6.09 &   9.07 &  15.36 \\
TFOCS & 119.08 & 245.74 & 452.82 & 685.25 & 749.55    &   696.45\\
\hline\hline
\emph{TH} & $p=25$ & $p=50$ & $p=100$ & $p=200$ & $p=500$ & $p=1000$ \\
\hline
S-TISP &     0.02 &   0.04 &   0.13 &   0.30 &   0.66 &   1.80\\
COORD &    0.03  &  0.07 &   0.14   & 0.31  &  1.03   & 2.75\\
IPM &    0.13   & 0.15  &  0.20    &0.28   & 0.67   & 2.16\\
TFOCS &    1.00  &  1.45 &  1.42    &4.52 &   3.23 &   5.74\\
\hline
\end{tabular}
}
\end{table}

As we can see from Table \ref{tab1}, TFOCS and IPM 
do not scale well for growing $p$, especially when $p>n$. 
After comparing the COORD estimates to those obtained by
interior point methods (SDPT3 and SeDuMi), we found that, unfortunately, COORD is a  very
crude  and inaccurate approach. Its inaccuracy is exacerbated by  warm
starts.   We also found that the solutions obtained by calling the TFOCS
package are not trustworthy for moderate or large values of $p$,  and  that TFOCS
is very slow. Our S-TISP achieves comparable accuracy to IPM in the above
experiments, and its computational costs scale well with the problem size.
In fact, it provides an impressive computational gain  over the
aforementioned algorithms for high-dimensional data, that is, large $p$. 


\subsection{Tuning Comparison}
In this part of the experiments, we provide empirical evidence of  the advantages
of the Group Square-Root Lasso in  parameter tuning.\\

We use the same Toeplitz design as before and set $\sigma=1$.  The true
coefficient vector is generated as
$\beta^0=\left(\{2.5\}^3,\right.$ $\left. \{0\}^3, \{2.5\}^3,
    \{2.5\}^3, \{0\}^3, \cdots, \{0\}^3\right)'$ consisting of
three $2.5$'s, three $0$'s, three $2.5$'s, three $2.5$'s, and finally a
sequence of three $0$'s.  Hence,  $S=3$ and the group sizes are equal to 3. We
fix  $n=100$ and vary $p$ at $60, 300, 600$.
Each setup is simulated 50 times, and at each run, the Group Square-Root Lasso algorithm,  implemented through our proposed S-TISP, is called with three parameter tuning strategies. \\

\noindent (a) Theoretical choice, denoted by TH. This is based on a simplified version
of  the sequence $\lambda_0$ given by Lemma \ref{lambdagauss}. To motivate our choice, we first recall the notation  $\zeta_j= \|X^j\|^2 /n$ and $\zeta =\max \zeta_j$, where $\| A \|$ is the spectral norm of a generic matrix $A$. Define $T_{\min} =\min T_j$, $T_{\max} = \max T_j$. With this notation,  we showed in the course of the proof of  Lemma \ref{lambdagauss} that  the sequence   $\lambda_0$ needs to be chosen such that,
for given $\alpha$,
\begin{eqnarray*}
\mpr\left(V\geq \lambda_0\right)
& \leq & \ \sum_{j=1}^{q} \mpr\left(\chi^{2}_{T_{j}}   \geq  \frac{ \frac{\lambda_0^2 }{n^2} (n - T_{\max})}
{ (\zeta - \frac{\lambda_0^2 T_{\min}}{n^2})_{+} } \cdot \chi^{2}_{n - T_j} \right) \leq \alpha,
\end{eqnarray*}
where $\chi^{2}_{T_{j}}$ and  $ \chi^{2}_{n - T_j}$ are independent $\chi^2$ variables. Since the ratio of these two variables has a $F$-distribution, and with the notation $\tau := \frac{ \frac{\lambda_0^2 }{n^2} (n - T_{\max})}{ (\zeta - \frac{\lambda_0^2 T_{\min}}{n^2})_+}$, we further have
\begin{eqnarray*}
\mpr\left(V\geq \lambda_0\right)
& \leq & \ \sum_{j=1}^{q} (1 - F_{T_j, n-T_j} \left(  \tau \right) )\\
& \leq & \ q \left( 1 - F_{T_{\min}, n-T_{\min}} \left( \tau\right)\right),
\end{eqnarray*}
where $F_{n_1, n_2}$ denotes   the cumulative distribution function of a  F-distribution with $n_1$ and $n_2$ degrees of freedom. Hence, $\mpr\left(V\geq \lambda_0\right) \leq \alpha$ if  $\tau \geq F_{T_{\min}, n-T_{\min}}^{-1} (1-\alpha/q)=:\tau_0$ or, equivalently, if
\begin{equation}\label{true} \lambda_0\geq n \sqrt{ \frac{ \zeta\tau_0 }{T_{\min} \tau_0 + n - T_{\max}}}.\end{equation}
  The proof of  Lemma  \ref{lambdagauss},
in which control of the event  $\left(V\geq \lambda_0\right)$ and the determination of $\lambda_0$ is done via deviation inequalities for $\chi^2$ random variables,
can be used to show that $\lambda_0$ given by (\ref{true}) above has the correct order of magnitude.  Since the calculation involving  the  F-distribution leading to (\ref{true})  is more precise, we advocate this choice for practical use, for models with Gaussian errors. We further
use Corollary \ref{sum} to choose $\lambda = \lambda_0$, for our particular design. \\

 Therefore, we use the form  \eqref{sqrt-lasso2} in our implementation, with  \[  \lambda_j =    \sqrt{\zeta \tau_0/(T_{\min} \tau_0+n -T_{\max})} \sqrt{ n T_j}/K, \]  and  $\tau_0 = F_{T_{\min}, n-T_{\min}}^{-1}(1-\alpha/q)$, $K = \|X\|_2/\sqrt 2$ and   $\alpha=0.01$.
 After the optimal estimate is located, bias correction is conducted  by fitting a local OLS restricted to the selected dimensions,  to boost  the prediction accuracy. \\

 \noindent  (b) Cross-Validation (CV). We use 5-fold CV to determine the optimal value of  $\lambda$ and the associated  estimate. Similarly, bias-correction is performed at the end.  \\

 \noindent (c) SCV-BIC \cite{She12}. We cross-validate the sparsity patterns instead of  the values of $\lambda$. Unlike $K$-fold CV, only one  penalized solution path  needs to be generated by running the  Group Square-Root Lasso on the \textit{entire} dataset. This  determines the candidate sparsity patterns. Then,  we fit restricted OLS    in each CV training to evaluate  the validation error of the associated sparsity pattern and append a BIC correction term to the total validation error. SCV-BIC is much less expensive than CV, noting that the  OLS fitting is cheap,  and has been shown to bring significant performance improvement, see \cite{She12} for details and \cite{bb} for a similar approach.\\

To measure  the prediction accuracy, we generated  additional test data with $N_{test}=$1e+4 observations in each simulation.
The effective prediction error is given by $\mbox{\textbf{MSE}}=100 \cdot (\sum_{i=1}^{N_{test}} (y_i - x_i^T \hat \beta)^2/(N_{test}\sigma^2)-1)$. We found the histogram of MSE is highly asymmetric and far from Gaussian. Therefore,  the $40\%$ trimmed-mean (instead of the mean or the somewhat crude median) of  $\mbox{MSE}$s was  reported  as the goodness of fit of the obtained model.
We characterize  the  selection consistency by computing the  \textbf{Miss} ($\mbox{M}$) rate -- the mean of $| \{ j: \beta^{j0} \neq 0,\hat{\beta}^{j0}=0\} | / | \{ j: \beta^{j0} \neq 0\} | $ over  all simulations, where $|\cdot|$ is the cardinality of a set,
and  \textbf{False Alarms} ($\mbox{FA}$) rate  -- the mean of $| \{ j: \beta^{j0} = 0,\hat{\beta}^{j0}\neq0\} | / | \{ j: \beta^{j0} = 0\} | $ over  all simulations. Correct selection occurs when M = FA = 0.

\begin{table}[ht!]
\centering
\setlength{\tabcolsep}{1mm}
\caption{\small{Performance of Group Square-Root Lasso Tunings---CV, SCV-BIC, and the theoretical choice (TH), in terms of miss rate (M), false alarm rate (FA), and prediction error (MSE).  } }\label{tab2}

\small{
\begin{tabular}{r | ccc | ccc | ccc}
\hline\hline
& \multicolumn{3}{c|}{$p=60$} & \multicolumn{3}{c|}{$p=300$} & \multicolumn{3}{c}{$p=600$}  \\
\hline
& M & FA & MSE & M & FA & MSE & M & FA & MSE \\
\hline
\textbf{CV} & 0\% & 12.75\% & 23.21 & 0\% & 2.56\% & 22.16 & 0\% & 0.80\% & 18.36\\
\textbf{SCV-BIC} &0\% & 0\% & 9.82 & 0\% & 0.02\% & 9.99 & 0\% & 0.01\% & 8.95\\
\textbf{TH} & 0\% & 0\% & 9.82 & 0\% & 0\% & 9.99 & 0.67\% & 0\% & 9.20\\
\hline
\end{tabular}
}
\end{table}

The missing rates are very low,  which indicates that all truly relevant predictors are detected most of the time.  We point out that this  will typically happen when  the signal strength is moderate to high (2.5 in our simulations), and it supports our theoretical findings.  We expect a lesser performance when the signal strength is weaker. 
We conclude from Table \ref{tab2} that the selection by CV is acceptable,
especially in high-dimensional, sparse problems, but it has the worst behavior relative to the other tuning strategies.
 SCV-BIC gives excellent prediction accuracy and
recovers the true sparsity pattern successfully. It is much more efficient than CV but still requires the computation of one Group Square-Root Lasso solution path.  The theoretical choice (TH)  directly specifies the value for the regularization parameter and there is  no need for  a time-consuming grid search. For Gaussian errors, this particular TH gives almost comparable performance to SCV-BIC in terms of both prediction and variable selection accuracy.

\section*{Appendix}

\subsection*{Proofs for Section 2}

Throughout this section we will make use of the following basic fact.
\begin{lemma}\label{setb}
For  a given $\alpha \in (0, 1)$,  let  $t = \sqrt{\frac{4\ln (1/\alpha)}{n}} + \frac{4\ln (1/\alpha)}{n}$ and define \begin{equation}\label{bb}\mathcal{B} := \{
\|\epsilon\|_2/\sqrt{n} \leq \sqrt{1 +t}\}.\end{equation}
Then, \[ \mpr\left(\mathcal{B}\right)\geq 1 - \alpha.\]
\end{lemma}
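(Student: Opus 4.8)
The plan is to recognize the quantity $\|\epsilon\|_2^2 = \sum_{i=1}^n \epsilon_i^2$ as a chi-square random variable with $n$ degrees of freedom, since the $\epsilon_i$ are i.i.d.\ standard Gaussian, and then to control its upper tail by a standard concentration inequality. Writing the complementary event as
\[
\mathcal{B}^c = \Big\{ \tfrac{1}{n}\|\epsilon\|_2^2 > 1 + t \Big\} = \big\{ \chi^2_n - n > nt \big\},
\]
the task reduces to bounding $\mpr(\chi^2_n - n > nt)$ by $\alpha$ for the stated $t$.

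First I would invoke the Laurent--Massart deviation bound for chi-square variables: for every $x > 0$,
\[
\mpr\big( \chi^2_n - n \geq 2\sqrt{nx} + 2x \big) \leq e^{-x}.
\]
Setting $x = \ln(1/\alpha)$ makes the right-hand side equal to $\alpha$, so it remains only to check that the threshold $nt$ appearing in the definition of $\mathcal{B}$ is at least $2\sqrt{nx} + 2x$ for this choice of $x$. A direct computation gives
\[
nt = 2\sqrt{n\ln(1/\alpha)} + 4\ln(1/\alpha),
\]
which exceeds the Laurent--Massart threshold $2\sqrt{n\ln(1/\alpha)} + 2\ln(1/\alpha)$ by the extra linear term $2\ln(1/\alpha) \geq 0$. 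Hence $\{\chi^2_n - n > nt\} \subseteq \{\chi^2_n - n \geq 2\sqrt{nx} + 2x\}$, and therefore $\mpr(\mathcal{B}^c) \leq \alpha$, equivalently $\mpr(\mathcal{B}) \geq 1-\alpha$, as claimed.

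If a self-contained argument is preferred to citing the deviation bound, I would instead derive the needed one-sided tail estimate directly by a Chernoff argument using the moment generating function $\me\, e^{s\chi^2_n} = (1-2s)^{-n/2}$, valid for $0 < s < 1/2$: the estimate $\mpr(\chi^2_n \geq n+a) \leq \exp(-s(n+a))(1-2s)^{-n/2}$, optimized over $s$, reproduces exactly the Laurent--Massart form after an elementary bound on $\ln(1-u)$. The only mild obstacle is this optimization and the accompanying bookkeeping of constants; however, the comfortable factor $4$ (rather than $2$) on the linear term of $t$ leaves ample slack, so no delicate constant-chasing is needed, and the conclusion follows exactly as above.
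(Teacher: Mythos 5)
Your proof is correct and follows the same route as the paper: the paper disposes of this lemma by citing Lemma~8.1 of B\"uhlmann and van de Geer, which is precisely the one-sided Laurent--Massart chi-square deviation bound you invoke, and your arithmetic checking that $nt = 2\sqrt{n\ln(1/\alpha)} + 4\ln(1/\alpha)$ dominates the threshold $2\sqrt{n\ln(1/\alpha)} + 2\ln(1/\alpha)$ is exactly the verification the citation leaves implicit.
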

\noindent The proof of this result  is a direct application of  Lemma 8.1 in \cite{Buhlmann11}. Notice that, on $\mathcal{B}$, we have $\|\epsilon\|_2/\sqrt{n} \leq C$, for a dominating constant $C$. We will make implicit use of this fact throughout.

\begin{proof}[Proof of Theorem \ref{thm.yoyo.pred}]
In the first step of the proof, we show that
$\widehat\delta:=\widehat\beta-\beta^0\in\Delta_{\gamma}$. The desired
bounds are then derived in a second step.\\

\noindent For the first step, we note that the definition of the estimator \eqref{eq.yoyo.estimator} implies
\aln{
&\frac{\|Y-X\widehat\beta\|_2}{\sqrt n}-\frac{\|Y-X\beta^0\|_2}{\sqrt n}
\leq \frac{\lambda}{n}\sgr\left(\|\beta^{0j}\|_2-\|\widehat\beta^{j}\|_2\right),
}
and simple algebra yields
\aln{\nonumber
\frac{\lambda}{n}\sgr\left(\|\beta^{0j}\|_2-\|\widehat\beta^{j}\|_2\right)= &\frac{\lambda}{n}\sgrs\left(\|\beta^{0j}\|_2-\|\widehat\beta^{j}\|_2\right)-\frac{\lambda}{n}\sgrsc\|\widehat\beta^{j}\|_2\\\nonumber
\leq &\frac{\lambda}{n}\sgrs|\|\beta^{0j}\|_2-\|\widehat\beta^{j}\|_2|- \frac{\lambda}{n}\sgrsc\|\widehat\beta^{j}\|_2\\
\leq &\frac{\lambda}{n}\sgrs\|\widehat\delta^{j}\|_2- \frac{\lambda}{n}\sgrsc\|\widehat\delta^{j}\|_2.
}
These two inequalities give
\eq{\label{eq.yoyo.basicleft}
\frac{\|Y-X\widehat\beta\|_2}{\sqrt n}\leq \frac{\|Y-X\beta^0\|_2}{\sqrt n}+\frac{\lambda}{n}\sgrs\|\widehat\delta^{j}\|_2- \frac{\lambda}{n}\sgrsc\|\widehat\delta^{j}\|_2.
}
Next, we bound the error term. We obtain, via an application of the
Cauchy-Schwarz's inequality, and recalling the definition of the error term $V$:
\al{\nonumber
| \epsilon'X\widehat\delta|= \ & |\sum_{j=1}^q\epsilon' X^j \widehat\delta ^j|\\
\nonumber\leq& \  \sum_{j=1}^q\|(\epsilon' X^j)'\|_2 \|\widehat\delta ^j\|_2\\
\nonumber\leq& \ \max_{1\leq j \leq q}\left\{\frac{ \sqrt n\|(\epsilon' X^j)'\|_2}{\sqrt T_j\| \epsilon\|_2}\right\}\frac{\| \epsilon\|_2}{\sqrt n} \sgr\|\widehat\delta ^j\|_2\\
=& \ V \ \frac{\| \epsilon\|_2}{\sqrt n} \sgr\|\widehat\delta ^j\|_2.\label{eq.yoyo.emppro}
}
We then observe that
\eqn{\frac{\nabla\|Y-X\beta\|_2|_{\beta=\beta^0}}{\sqrt n}=\frac{\text - X'\epsilon}{\sqrt n\|\epsilon\|_2},
}
and use Inequality \eqref{eq.yoyo.emppro} and the fact that any
norm is convex to obtain
\aln{
\frac{\|Y-X\widehat\beta\|_2}{\sqrt n}-\frac{\|Y-X\beta^0\|_2}{\sqrt n}
\geq & \text - \frac{|\epsilon'X \widehat\delta|}{\sqrt n\|\epsilon\|_2}\\
\geq &\text - \frac{V}{n}\sgr\|\widehat\delta^j\|_2.
}
Since on the set $\mathcal{A}$ we have $\lambda/\overline\gamma\geq V$,  we further obtain
\eq{\label{eq.yoyo.basicright}
\frac{\|Y-X\widehat\beta\|_2}{\sqrt n}-\frac{\|Y-X\beta^0\|_2}{\sqrt n}
\geq \text - \frac{\lambda}{n\overline\gamma}\sgr\|\widehat\delta^j\|_2.
}
Combining \eqref{eq.yoyo.basicleft} and \eqref{eq.yoyo.basicright}, we find
\aln{
 \text - \frac{\lambda}{n\overline\gamma}\sgr\|\widehat\delta^j\|_2\leq \frac{\lambda}{n}\sgrs\|\widehat\delta^{j}\|_2- \frac{\lambda}{n}\sgrsc\|\widehat\delta^{j}\|_2,
}
and thus
\aln{
 \left(1-\frac{1}{\overline\gamma}\right) \frac{\lambda}{n}\sgrsc\|\widehat\delta^j\|_2\leq \left(1+\frac{1}{\overline\gamma}\right)\frac{\lambda}{n}\sgrs\|\widehat\delta^{j}\|_2.
}
This implies $ \frac{\lambda}{n}\sgrsc\|\widehat\delta^j\|_2\leq \left(\frac{\overline\gamma+1}{\overline\gamma-1}\right)\frac{\lambda}{n}\sgrs\|\widehat\delta^{j}\|_2 $ and since $\lambda>0$, we obtain
\al{ \label{eq.cone}
\sgrsc\|\widehat\delta^j\|_2\leq   \gamma\sgrs\|\widehat\delta^{j}\|_2,
}
or  equivalently, $\widehat\delta \in\Delta_{\gamma}$,
as desired.\\

\noindent To derive the bounds stated in the theorem we begin by observing that
\begin{equation}
  \label{eq.yoyo.new}
  \frac{\|Y-X\widehat\beta\|_2}{\sqrt
      n}-\frac{\|Y-X\beta^0\|_2}{\sqrt n} \leq \frac{\lambda}{n}\frac{\sqrt{s^*}\|X\widehat\delta\|_2}{\sqrt n \kappa}
\end{equation}
by  \eqref{eq.yoyo.basicleft} and the Compatibility Condition~\eqref{eq.yoyo.compconst}.
Next, we write
\aln{
\frac{\|Y-X\widehat\beta\|_2^2}{ n}-\frac{\|Y-X\beta^0\|_2^2}{ n}
=&\frac{\|X\widehat\delta -\sigma\epsilon\|_2^2}{ n}-\frac{\|\sigma\epsilon\|_2^2}{ n}
=\frac{\|X\widehat\delta\|_2^2}{ n}-\frac{2\sigma\epsilon'X\widehat\delta}{ n},
}
and we use \eqref{eq.yoyo.compconst}, \eqref{eq.yoyo.emppro}, and  \eqref{eq.yoyo.new}  to obtain
\al{\nonumber
&\frac{\|X\widehat\delta\|_2^2}{ n}
  =\frac{\|Y-X\widehat\beta\|_2^2}{ n}-\frac{\|Y-X\beta^0\|_2^2}{
    n}+\frac{2\sigma\epsilon'X\widehat\delta}{ n}\\
\nonumber
  =&\left(\frac{\|Y-X\widehat\beta\|_2}{\sqrt n}-\frac{\|Y-X\beta^0\|_2}{
    \sqrt n}\right)\left(\frac{\|Y-X\widehat\beta\|_2}{\sqrt n}+\frac{\|Y-X\beta^0\|_2}{
    \sqrt n}\right)+\frac{2\sigma\epsilon'X\widehat\delta}{ n}\\
 \nonumber
  \leq& \frac{\lambda}{n}\frac{\sqrt{s^*}\|X\widehat\delta\|_2}{\sqrt n \kappa}\left(\frac{2\|Y-X\beta^0\|_2}{\sqrt
      n}+\frac{\lambda}{n}\frac{\sqrt{s^*}\|X\widehat\delta\|_2}{\sqrt n \kappa}\right)
+\frac{2V\|\sigma\epsilon\|_2}{n^{{3}/{2}}}\sgr\|\widehat\delta^j\|_2
\\
 \nonumber
  \leq& \frac{s^*\lambda^2}{ \kappa^2n^2}\frac{\|X\widehat\delta\|_2^2}{n}+\frac{2\lambda}{n}\frac{\sqrt{s^*}\|X\widehat\delta\|_2}{\sqrt n \kappa}\frac{\|\sigma\epsilon\|_2}{\sqrt
      n}
+\frac{2(1+\gamma)\lambda\|\sigma\epsilon\|_2}{\overline{\gamma}n^{{3}/{2}}}\frac{\sqrt{s^*}\|X\widehat\delta\|_2}{\sqrt n \kappa}\\
 \nonumber
  =& \frac{s^*\lambda^2}{ \kappa^2n^2}\frac{\|X\widehat\delta\|_2^2}{n}+\gamma\frac{2\lambda}{n}\frac{\sqrt{s^*}\|X\widehat\delta\|_2}{\sqrt n \kappa}\frac{\|\sigma\epsilon\|_2}{\sqrt
      n}
}
since on $\mathcal{A}$ we have $\lambda/\overline\gamma\geq V$.
Consequently,
\[
\frac{\|X\widehat\delta\|_2^2}{n}\leq  \frac{u\sqrt{s^*}\lambda\|\sigma\epsilon\|_2\|X\widehat\delta\|_2}{n^2\kappa}, \]
where  $u:= \frac{2\gamma}{1-\frac{\lambda^2s^*}{n^2\kappa^2}}\in(0,\infty)$ by assumption (\ref{less}). Since on $\mathcal{B}$ we have
\[ \frac{\|\sigma\epsilon\|_2}{\sqrt n} \leq\sigma \sqrt{1 + t}, \]  the first statement of the theorem follows:
\eqn{
\|X(\widehat\beta-\beta^0)\|_2\leq \sigma \sqrt{1 + t} \frac{\lambda\sqrt{s^*}u}{\kappa\sqrt n}\lesssim \sqrt{1 + t}  \frac{\lambda\sqrt{s^*}}{\kappa\sqrt n}.\
}
For the second claim, we use the fact that
$\delta\in\Delta_{\gamma}$ and the Compatibility Condition \eqref{eq.yoyo.compconst} to
deduce that
\eqn{
  \sgr\|\delta^j\|_2\leq (\gamma+1)\sgrs\|\delta^j\|_2\leq \frac{(\gamma+1)\sqrt{s^*}\|X(\widehat\beta-\beta^0)\|_2}{\sqrt n \kappa}\leq  \frac{\lambda (\gamma+1)us^*}{n\kappa^2}\frac{\|\sigma\epsilon\|_2}{\sqrt n}.
}
Therefore, again on the set $\mathcal{B}$, we have

\eqn{
  \sgr\|(\hat{\beta} - \beta)^j\|_2 \leq \sigma \sqrt{1 + t}\frac{\lambda (\gamma+1)us^*}{n\kappa^2}\lesssim \sqrt{1 + t}\frac{\lambda s^*}{n\kappa^2},
  }
  which concludes the proof of this theorem.
\end{proof}


\noindent We  next prove Theorem  \ref{thm.yoyo.varsel}.  We begin with  two  preparatory results.

\begin{lemma}\label{lemma.yoyo.kkt}
Assume that  $Y\neq X\widehat\beta$ over some set $\mathcal{C}$ . Then,  on $\mathcal{C}$, the quantity $\widehat\beta$ is a solution of the criterion \eqref{eq.yoyo.estimator} if and only if for every  $1\leq j\leq q$
\al{
&\widehat\beta^j\neq 0 \Rightarrow \frac{(X'(Y-X\widehat\beta))^j}{\|Y-X\widehat\beta\|_2}=\frac{\lambda\sqrt{T_j}}{\sqrt n\|\widehat\beta^j\|_2}\widehat\beta^j \label{kkt1}\\
&\widehat\beta^j= 0\Rightarrow \frac{\|(X'(Y-X\widehat\beta))^j\|_2}{\|Y-X\widehat\beta\|_2}\leq \frac{\lambda\sqrt{T_j}}{\sqrt n}.  \label{kkt2}
}
\end{lemma}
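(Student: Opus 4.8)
The plan is to recognize that the objective in \eqref{eq.yoyo.estimator} is convex, so that $\widehat\beta$ is a global minimizer if and only if the zero vector lies in the subdifferential of the objective at $\widehat\beta$. Writing $F(\beta) := \frac{\|Y-X\beta\|_2}{\sqrt n} + \frac{\lambda}{n}\sgr\|\beta^j\|_2$, the first step is to observe that on the set $\mathcal{C}$, where $Y\neq X\widehat\beta$, the data-fitting term $\frac{\|Y-X\beta\|_2}{\sqrt n}$ is differentiable at $\widehat\beta$, with gradient $-X'(Y-X\widehat\beta)/(\sqrt n\,\|Y-X\widehat\beta\|_2)$, in agreement with the gradient computation already used in the proof of Theorem~\ref{thm.yoyo.pred}. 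Because this term is differentiable at $\widehat\beta$, the subdifferential sum rule applies and $\partial F(\widehat\beta)$ splits as the gradient of the data-fitting term plus the subdifferential of the penalty.

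Next I would compute the subdifferential of the group penalty. Since it is a separable sum over groups, its subdifferential decouples group by group: the relevant subdifferential of $\|\beta^j\|_2$ is the singleton $\{\widehat\beta^j/\|\widehat\beta^j\|_2\}$ when $\widehat\beta^j\neq 0$, and the closed Euclidean unit ball $\{z\in\mr^{T_j}:\|z\|_2\leq 1\}$ when $\widehat\beta^j=0$. Imposing $0\in\partial F(\widehat\beta)$ then yields one vector equation for each block $j$, namely $-\frac{(X'(Y-X\widehat\beta))^j}{\sqrt n\,\|Y-X\widehat\beta\|_2}+\frac{\lambda\sqrt{T_j}}{n}g^j=0$, where $g^j$ is the subgradient block just described.

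For a group with $\widehat\beta^j\neq0$ I substitute $g^j=\widehat\beta^j/\|\widehat\beta^j\|_2$, multiply through by $\sqrt n$, and rearrange to obtain exactly \eqref{kkt1}. For a group with $\widehat\beta^j=0$ the stationarity relation reads $\frac{(X'(Y-X\widehat\beta))^j}{\|Y-X\widehat\beta\|_2}=\frac{\lambda\sqrt{T_j}}{\sqrt n}\,g^j$ for some $g^j$ in the unit ball; taking Euclidean norms and using $\|g^j\|_2\leq1$ gives \eqref{kkt2}. Conversely, if \eqref{kkt1} and \eqref{kkt2} hold, one reconstructs a valid subgradient of $F$ at $\widehat\beta$ (taking $g^j$ to be the appropriately scaled block in the zero case, which by \eqref{kkt2} has norm at most one), so that $0\in\partial F(\widehat\beta)$ and, by convexity, $\widehat\beta$ is a global minimizer; this delivers the ``if and only if.''

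The main obstacle is simply the justification that the sum rule for subdifferentials applies, which hinges on the differentiability of $\|Y-X\beta\|_2$ at $\widehat\beta$; this is precisely why the hypothesis $Y\neq X\widehat\beta$ on $\mathcal{C}$ is needed, since at a residual-zero point the data-fitting term is non-smooth and its subdifferential becomes a ball rather than a singleton, which would break the clean block-wise characterization. Everything else is routine convex calculus and per-group bookkeeping.
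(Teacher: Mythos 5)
Your proposal is correct and follows essentially the same route as the paper's proof: both rely on convexity, the optimality condition $0\in\partial F(\widehat\beta)$, differentiability of the residual-norm term on $\mathcal{C}$ (which is exactly where the hypothesis $Y\neq X\widehat\beta$ enters), and the standard groupwise subdifferential of the Euclidean norm (a singleton off zero, the unit ball at zero). Your write-up is in fact slightly more explicit than the paper's, which simply states that the claim follows from the three displayed subgradient computations.
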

\begin{proof}
Since all terms of the criterion \eqref{eq.yoyo.estimator} are convex, and
thus, the criterion is convex, we can apply standard subgradient calculus. The subgradient $\partial f|_x$ of a convex function $f:\mr^p\to \mr$ at a
point $x\in\mr^p$ is defined as the
set of vectors $v\in\mr^p$ such that for all $y\in\mr^p$
\begin{equation*}
  f(y)\geq f(x)+v'(y-x).
\end{equation*}
From this, one derives easily that subgradients are linear and additive and
that the subgradient $\partial f|_x$ is equal to the gradient $\nabla f|_x$ if
the function $f$ is differentiable at $x$. Moreover,  $x\in\mr^p$ is a
minimum of the function $f$ if  and only if $0\in\partial f|_x$.
Since $Y\neq X\widehat\beta$, the first term of the criterion \eqref{eq.yoyo.estimator} is differentiable and we have
\al{\label{eq.yoyo.sub1}
\nabla\|Y-X\beta\|_2|_\beta=&\frac{\nabla\|Y-X\beta\|_2^2|_\beta}{2\|Y-X\beta\|_2}=\frac{\text -X'(Y-X\beta)}{\|Y-X\beta\|_2}.
}
For the remaining terms, we observe
that for any vector $w\in\mr^T\backslash\{0\}$, $T\in\mn$,
\al{\label{eq.yoyo.sub2}
\nabla\|w\|_2|_w=\frac{w~}{\|w\|_2}
}
and for $w=0$
\al{\label{eq.yoyo.sub3}
v\in\partial\|w\|_2|_{w=0}\Leftrightarrow\|z\|_2\geq\|0\|_2+(z-0)'v=z'v~~~\text{for
all~}z\in\mr^T
}
and, consequently, $\partial \|w\|_2|_{w=0}=\{v\in\mr^T:\|v\|_2\leq 1\}$.\\
The claim follows then from Equations \eqref{eq.yoyo.sub1},
\eqref{eq.yoyo.sub2}, and  \eqref{eq.yoyo.sub3}.
\end{proof}

\begin{lemma}\label{lemma.yoyo.tech} Under the conditions of
  Theorem~\ref{thm.yoyo.pred}, it holds that, on the set $\mathcal{A} \cap \mathcal{B}$, we have
\eqn{
\left(1-\frac{\lambda\sqrt{s^*}u}{n\kappa}\right)\|\sigma\epsilon\|_2\leq \|Y-X\widehat\beta\|_2\leq  \left(1+\frac{\lambda\sqrt{s^*}u}{n\kappa}\right)\|\sigma\epsilon\|_2
}
for $u:=\frac{2\gamma}{1-\frac{\lambda^2s^*}{n^2\kappa^2}}$.
\end{lemma}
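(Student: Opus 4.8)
The plan is to reduce the claim to the single estimate on $\|X\widehat\delta\|_2$ that was already established in the course of the proof of Theorem~\ref{thm.yoyo.pred}, so that the lemma becomes a two-line consequence of the triangle inequality. First I would rewrite the residual of the fitted model in terms of $\widehat\delta := \widehat\beta - \beta^0$. Since $Y - X\beta^0 = \sigma\epsilon$ under model~\eqref{Model1}, we have $Y - X\widehat\beta = \sigma\epsilon - X\widehat\delta$, and hence, applying the triangle inequality in both directions,
\[
\|\sigma\epsilon\|_2 - \|X\widehat\delta\|_2 \ \leq\ \|Y - X\widehat\beta\|_2 \ \leq\ \|\sigma\epsilon\|_2 + \|X\widehat\delta\|_2.
\]
Everything therefore reduces to controlling $\|X\widehat\delta\|_2$ by a suitable multiple of $\|\sigma\epsilon\|_2$.

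The required control is exactly the intermediate bound obtained in the proof of Theorem~\ref{thm.yoyo.pred}. There, working on $\mathcal{A}$ (where $\lambda/\overline\gamma \geq V$) and combining the Compatibility Condition with the membership $\widehat\delta \in \Delta_{\gamma}$, one arrives at
\[
\frac{\|X\widehat\delta\|_2^2}{n} \ \leq\ \frac{u\sqrt{s^*}\lambda\,\|\sigma\epsilon\|_2\,\|X\widehat\delta\|_2}{n^2\kappa}, \qquad u := \frac{2\gamma}{1 - \frac{\lambda^2 s^*}{n^2\kappa^2}}.
\]
If $X\widehat\delta = 0$ the desired bound is trivial; otherwise I would divide both sides by $\|X\widehat\delta\|_2/n$ to obtain $\|X\widehat\delta\|_2 \leq \frac{\lambda\sqrt{s^*}u}{n\kappa}\|\sigma\epsilon\|_2$. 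Here assumption~\eqref{less} guarantees $1 - \lambda^2 s^*/(n^2\kappa^2) > 0$, so that $u \in (0,\infty)$ and the division is legitimate.

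Substituting this into the two-sided triangle inequality above yields
\[
\left(1 - \frac{\lambda\sqrt{s^*}u}{n\kappa}\right)\|\sigma\epsilon\|_2 \ \leq\ \|Y - X\widehat\beta\|_2 \ \leq\ \left(1 + \frac{\lambda\sqrt{s^*}u}{n\kappa}\right)\|\sigma\epsilon\|_2,
\]
which is precisely the statement of the lemma. I expect no genuine obstacle: the only quantitative content, namely the bound on $\|X\widehat\delta\|_2$, has already been carried out for Theorem~\ref{thm.yoyo.pred}, and the remainder is the triangle inequality together with the substitution. The two small points to verify are that the reduction is handled separately in the degenerate case $\|X\widehat\delta\|_2 = 0$, and that the factor $u$ is finite, which is exactly what~\eqref{less} provides. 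I would also note that the event $\mathcal{B}$ is not actually required for this lemma --- the estimate on $\|X\widehat\delta\|_2$ holds on $\mathcal{A}$ alone --- but since $\mathcal{A}\cap\mathcal{B}\subseteq\mathcal{A}$ the conclusion holds as stated, and phrasing it over $\mathcal{A}\cap\mathcal{B}$ keeps it consistent with the way the lemma is invoked later.
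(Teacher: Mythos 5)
Your proof is correct and follows essentially the same route as the paper's: the two-sided triangle inequality applied to $Y-X\widehat\beta=\sigma\epsilon-X\widehat\delta$, combined with the intermediate bound $\|X\widehat\delta\|_2\leq \frac{\lambda\sqrt{s^*}u}{n\kappa}\|\sigma\epsilon\|_2$ extracted from the proof of Theorem~\ref{thm.yoyo.pred} (the paper simply says ``the claim follows by Theorem~\ref{thm.yoyo.pred}'', and you correctly identify that it is this intermediate estimate, not the final $\lesssim$ statement, that is needed). Your side remarks --- the degenerate case $X\widehat\delta=0$, the role of \eqref{less} in keeping $u$ finite, and the observation that $\mathcal{B}$ is not actually needed for this particular bound --- are all accurate.
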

\begin{proof}
By  the triangle inequality
\eqn{
\|\sigma\epsilon\|_2-\|X(\widehat\beta-\beta_0)\|_2\leq\|Y-X\widehat\beta\|_2\leq\|\sigma\epsilon\|_2+\|X(\widehat\beta-\beta_0)\|_2.
}
The claim follows immediately  by Theorem~\ref{thm.yoyo.pred} above.
\end{proof}

\begin{proof}[Proof of Theorem \ref{thm.yoyo.varsel}]
The crucial step  in  this proof is to use the KKT Conditions in
Lemma~\ref{lemma.yoyo.kkt} in order to show that, on $\mathcal{A}_1 \cap \mathcal{B}$,  we have $\widehat\beta_{S^c}=0$.\\

First, we observe that Lemma~\ref{lemma.yoyo.tech} implies that
$Y-X\widehat\beta\neq 0$ on $\mathcal{A} \cap \mathcal{B}$, and we can consequently apply the KKT Conditions derived in
Lemma~\ref{lemma.yoyo.kkt} for $\mathcal{C} = \mathcal{A} \cap \mathcal{B}$.  Moreover,  since  by definition,  $\mathcal{A}_1 \cap \mathcal{B} \subseteq \mathcal{A} \cap \mathcal{B}$,  the results also hold on the smaller set. Thus, there exists  a
 vector $\tau\in\mr^p$ such that $\|\tau^j\|_2\leq \sqrt{T_j}$ for all
 $1\leq j \leq q$ and, additionally,
 $\tau^j=\frac{\sqrt{T_j}\widehat\beta^j}{\|\widehat\beta^j\|_2}$,  for all
 $1\leq j \leq q$ such that $\widehat\beta^j\neq 0$, and $\tau$ satisfies
 the equality
\eqn{
\frac{X'(Y-X\widehat\beta)}{\|Y-X\widehat\beta\|_2}=\frac{\lambda}{\sqrt n}\tau.
}

\noindent  We rewrite this with  \[\widehat \psi:=\|Y-X\widehat\beta\|_2 \ \mbox{and~}
\widehat\delta:=\widehat\beta-\beta^0 \] as
\eqn{
\sigma X'\epsilon-X'X\widehat\delta=\frac{\widehat \psi\lambda}{\sqrt n}\tau.
}
So, on the one hand, we have
\eq{\label{eq.yoyo.onehand}
\text- n^2\See \widehat\delta_S-n^2\Sez \widehat\delta_{S^c} = \sqrt n\widehat \psi\lambda \tau_S-n\sigma (X'\epsilon)_S,
}
or, equivalently,
\eqn{
\text-n^2\widehat\delta_S- n^2\Seem\Sez \widehat\delta_{S^c} = \sqrt n\widehat \psi\lambda  \Seem\tau_S-n\sigma  \Seem(X'\epsilon)_S,
}
and finally
\eq{\label{eq.yoyo.vsfirst}
\text-n^2\widehat\delta_{S^c}'\Sze\widehat\delta_S-n^2\widehat\delta_{S^c}'\Sze \Seem\Sez \widehat\delta_{S^c} = \sqrt n\widehat \psi\lambda \widehat\delta_{S^c}'\Sze \Seem\tau_S-n\sigma \widehat\delta_{S^c}'\Sze \Seem(X'\epsilon)_S.
}
On the other hand, we have
\eqn{
\text-n^2\Sze \widehat\delta_S-n^2\Szz \widehat\delta_{S^c} = \sqrt n\widehat \psi\lambda \tau_{S^c}-n\sigma (X'\epsilon)_{S^c}.
}
Since for $j\in S^c$
\aln{
\widehat\beta^j\neq 0&\Rightarrow\widehat\delta^j\cdot\tau^j=\frac{\sqrt{T_j}~\widehat\delta^j\cdot\widehat\delta^j}{\|\widehat\delta\|_2}=\sqrt{T_j}\|\widehat\delta^j\|_2\\
\widehat\beta^j= 0&\Rightarrow\widehat\delta^j\cdot\tau^j=0=\sqrt{T_j}\|\widehat\delta^j\|_2,
}
this implies that
\aln{
  \text-n^2\widehat\delta_{S^c}'\Sze \widehat\delta_S-n^2\widehat\delta_{S^c}'\Szz \widehat\delta_{S^c} &= \sqrt n\widehat \psi\lambda\widehat\delta_{S^c}'\tau_{S^c}-n\sigma \widehat\delta_{S^c}'(X'\epsilon)_{S^c}\\
  &= \sqrt n\widehat \psi\lambda\sgrsc\left(\|\widehat\delta^j\|_2-\frac{\sigma\sqrt n~\widehat\delta^j\cdot (X'\epsilon)^j}{\sqrt{T_j}\lambda\widehat \psi} \right).
}
The right-hand side can be bounded from below, using Cauchy-Schwarz's Inequality, by
\aln{
&\sqrt n\widehat \psi\lambda\sgrsc\left(\|\widehat\delta^j\|_2-\|\widehat\delta^j\|_2\frac{\sigma\sqrt n\|(X'\epsilon)^j\|_2}{\sqrt{T_j}\lambda \widehat \psi}\right).
}
Lemma~\ref{lemma.yoyo.tech} implies that $\lambda/\widetilde\eta\geq \widehat V$ for
\eqn{
\widehat V:=\max_{1\leq j \leq q}\left\{\frac{\sigma \sqrt n\|(X'\epsilon)^j\|_2}{\sqrt{T_j}\widehat \psi}\right\},
}
and thus, the above term can be bounded from below by
\eqn{
\left(1-\frac{1}{{\widetilde\eta}}\right)\sqrt n\widehat \psi\lambda \sgrsc\|\widehat\delta^j\|_2.
}
So, in summary, we have
\eq{\label{eq.yoyo.vssecond}
\text-n^2\widehat\delta_{S^c}'\Sze \widehat\delta_S-n^2\widehat\delta_{S^c}'\Szz \widehat\delta_{S^c}\geq \left(1-\frac{1}{{\widetilde\eta}}\right)\sqrt n\widehat \psi\lambda \sgrsc\|\widehat\delta^j\|_2.
}
Subtracting Equation~\eqref{eq.yoyo.vssecond} from
Equation~\eqref{eq.yoyo.vsfirst} then yields
\begin{eqnarray} \label{eq.yoyo.varsel2}
&& n^2\widehat\delta_{S^c}'(\Szz-\Sze \Seem\Sez) \widehat\delta_{S^c}  \nonumber  \\
&& \leq  \sqrt n\widehat \psi\lambda \widehat\delta_{S^c}'\Sze \Seem(\tau_S-\frac{\sqrt n\sigma}{\lambda \widehat \psi}(X'\epsilon)_S)-\left(1-\frac{1}{{\widetilde\eta}}\right)\sqrt n\widehat \psi\lambda \sgrsc\|\widehat\delta^j\|_2. \end{eqnarray}
The first term of the right-hand side above can be bounded via the Cauchy-Schwarz's
inequality by
\aln{
\sqrt n\widehat \psi\lambda \widehat\delta_{S^c}'\Sze \Seem(\tau_S-\frac{\sqrt n\sigma}{\lambda \widehat \psi}(X'\epsilon)_S)&=\sqrt n\widehat \psi\lambda \sum_{j\in S^c}\widehat\delta^j\cdot(\Szet \Seem(\tau_S-\frac{\sqrt n\sigma}{\lambda \widehat \psi}(X'\epsilon)_S))^j\\
&\leq
\sqrt n\widehat \psi\lambda \sgrsc\|\widehat\delta^j\|_2\frac{\|(\Szet \Seem(\tau_S-\frac{\sqrt n\sigma}{\lambda \widehat \psi}(X'\epsilon)_S))^j\|_2}{\sqrt{T_j}}.
}
Now, we observe that if $\lambda/{\widetilde\eta}\geq\widehat V$, then $\frac{\sigma\sqrt n}{\widehat \psi\lambda}\|(X'\epsilon)^j\|_2\leq \frac{\sqrt{T_j}}{{\widetilde\eta}}$ for all $0\leq j\leq q$, and thus, the above expression can be bounded by
\aln{ &\sqrt n\widehat \psi\lambda \max_{v:\|v^k\|_2\leq \left(1+\frac 1 {\widetilde\eta}\right) \sqrt{T_k}}\sgrsc\|\widehat\delta^j\|_2\frac{\|(\Szet \Seem v)^j\|_2}{\sqrt{T_j}}\\
=&\left(1+\frac 1 {\widetilde\eta}\right)\sqrt n\widehat \psi\lambda \max_{v:\|v^k\|_2\leq \sqrt{T_k}}\sgrsc\|\widehat\delta^j\|_2\frac{\|(\Szet \Seem v)^j\|_2}{\sqrt{T_j}}.
}
If $\widehat\beta_{S^c}\neq 0$, this is strictly smaller than
\aln{
\left(1+\frac 1 {\widetilde\eta}\right)u\sqrt n\widehat \psi\lambda\sgrsc\|\widehat\delta^j\|_2
=\left(1-\frac 1 {\widetilde\eta}\right)\sqrt n\widehat \psi\lambda\sgrsc\|\widehat\delta^j\|_2
}
by  our  Group Irrepresentable Condition. Then, by  Inequality~\eqref{eq.yoyo.varsel2}, this yields
\aln{
 n^2\widehat\delta_{S^c}'(\Szz-\Sze \Seem\Sez) \widehat\delta_{S^c}<0.
}
But since $\Szz-\Sze \Seem\Sez\geq 0$, this leads to a
contradiction. Hence, $\widehat\delta_{S^c}=0$ and the first claim is proved.\\

\noindent For the second claim, we invoke $\widehat\delta_{S^c}=0$ to obtain, using
Equation~\eqref{eq.yoyo.onehand},
\eqn{
\text- n^2\See \widehat\delta_S = \sqrt n\widehat \psi\lambda  \tau_S-n\sigma (X'\epsilon)_S.
}
This implies
\eqn{
\text- n^2\widehat\delta_S = \sqrt n\widehat \psi\lambda   \Seem\left(\tau_S-\frac{\sqrt n\sigma (X'\epsilon)_S}{\widehat \psi \lambda}\right)
}
and, using $\lambda/{\widetilde\eta}\leq \widehat V$ and bounding the norms as above,
\aln{
  \|\widehat\delta^j\|_{\infty}\leq&\max_{v:\|v^k\|_2\leq \sqrt{T_j}} \frac{\left(1+\frac{1}{{\widetilde\eta}} \right)\lambda}{n}\frac{\widehat \psi}{\sqrt n} \|(\Seemt v)^j\|_{\infty}\\
\leq& \  \ \frac{\left(1+\frac{1}{{\widetilde\eta}} \right)\sqrt{T_j}\lambda}{n}\frac{\widehat \psi}{\sqrt n}\xi_{\|\cdot \|_{\infty}} \\
\leq&  \      \frac{\left(1+\frac{1}{{\widetilde\eta}} \right)\sqrt{T_j}\lambda}{n}\frac{\|\sigma\epsilon\|_2}{\sqrt n}\left(1+\frac{\lambda\sqrt{s^*}u}{n\kappa}\right)\xi_{\|\cdot \|_{\infty}} \\
\leq& \ \frac{2\sigma \sqrt{1 + t}}{1 + \widetilde\eta}(1 + u) \xi_{\|\cdot \|_{\infty}} \frac{\lambda\sqrt{T_j}}{n}    \\
\leq  & \ D \frac{\lambda\sqrt{T_j}}{n},  \ \mbox{for all} \ 1\leq j \leq q,
}
which  is the second claim of this theorem.  In the above derivation we used Lemma ~\ref{lemma.yoyo.tech}  above for the third inequality,  and assumption (\ref{less}) for the forth and the fact that  $\sqrt{1 + t}$ is bounded by a constant, by the definition of the set $\mathcal{B}$ in Lemma \ref{setb} above. We also recall that under  (\ref{less}), the quantity $u$ is a positive constant.

\noindent The third claim follows immediately from the first two and the Beta Min Condition.  This concludes the prof of this theorem.
\end{proof}

 \begin{proof}[Proof of Lemma \ref{lambdagauss}] 

 We first observe that
\begin{align*}
\mpr\left(V\geq \lambda_0\right)=& \ \mpr\left(\max_{1\leq j \leq q}\left\{\frac{ \sqrt n\|(\epsilon' X^j)'\|_2}{\sqrt T_j\| \epsilon\|_2}\right\}\geq \lambda_0\right)\\
= & \ \mpr\left(\max_{1\leq j \leq q} \epsilon' \left( \frac{X^j (X^j)'}{n} - \frac{\lambda_0^2 T_j}{n^2} I  \right)\epsilon \geq 0\right)\\
\leq& \  \sum_{j=1}^q \mpr\left(\epsilon' \left( \frac{X^j (X^j)'}{n} - \frac{\lambda_0^2 T_j}{n^2} I  \right)\epsilon \geq 0\right).
\end{align*}
Let $U'(j) D(j) U(j)$ be a spectral decomposition of ${X^j (X^j)'}/{n}$ such that $U(j)$ is orthogonal and $D(j)$ is diagonal with diagonal entries $\xi_1(j)\geq\dots\geq \xi_{T_j}(j)\geq\xi_{T_j+1}(j)=\dots =\xi_{n}(j)=0$.  With the notation $\zeta_j= \|X^j\|^2 /n$,  where $\| A \|$ is the spectral norm of a generic matrix $A$, we have  $\zeta_j = \xi_1(j)$.  It follows that
\begin{align*}
  \epsilon' \left( \frac{X^j (X^j)'}{n} - \frac{\lambda_0^2 T_j}{n^2} I  \right)\epsilon=&\epsilon' \left( {U'(j) D(j) U(j)} - \frac{\lambda_0^2 T_j}{n^2} I  \right)\epsilon\\
=&(U(j)\epsilon)' \left( D(j) - \frac{\lambda_0^2 T_j}{n^2} I  \right)U(j)\epsilon\\
\leq&  \|\epsilon_1\|_2^2 \left( \zeta_j- \frac{\lambda_0^2 T_j}{n^2} \right)_+ - \|\epsilon_2\|_2^2 \frac{\lambda_0^2 T_j}{n^2},
\end{align*}
where $\epsilon_1$ and $\epsilon_2$ are independent with $\epsilon_1 \sim \mathcal N(0, I_{T_j})$ and $\epsilon_2 \sim \mathcal N(0, I_{(n-T_j)})$. Thus, for any fixed $r \in (0, 1)$ we have
\begin{eqnarray} \label{xibound}
\mpr\left(V\geq \lambda_0\right)
&\leq & \ \sum_{j=1}^{q} \mpr\left(\frac{ \|\epsilon_1\|_2^2}{T_j } \cdot ( \zeta_j - \frac{\lambda_0^2T_j}{n^2})_+ \geq  \frac{\lambda_0^2 }{n^2} \cdot \|\epsilon_2\|_2^2 \right)  \\
&\leq &  \ \sum_{j=1}^{q} \  \mpr\left(\frac{ \|\epsilon_1\|_2^2}{ T_j}  \cdot (\frac{ n^2\zeta_j}{\lambda_0^2} - T_j )_+\cdot \frac{1}{n-T_j}\geq  1 - r \right) \nonumber \\
&+ &  \ \sum_{j=1}^{q} \ \mpr\left(       \frac{1}{n-T_j} \cdot \|\epsilon_2\|_2^2  \leq 1 - r  \right). \nonumber
\end{eqnarray}
If $ \zeta_j \leq  \frac{\lambda_0^2T_j}{n^2}$, then the first sum  in the inequality above is trivially equal to zero, therefore the argument below is needed only when the reverse inequality holds.  From Laurent and Massart \cite[Lemma 1]{lm2000}, $\mpr (X - d \geq d t ) \leq \exp\left(-\frac{d}{4} (\sqrt { (1 + 2 t} - 1)^2\right)$ and $P(X \leq d - dt ) \leq \exp \left(- \frac{d}{4} t^2\right)$,  for $X\sim \chi^2(d)$.
Therefore, for the first term in (\ref{xibound}) we obtain, for each $j$:
\begin{eqnarray*}
&&  \mpr\left(\frac{ \|\epsilon_1\|_2^2}{ T_j}  \cdot (\frac{ n^2\zeta_j}{\lambda_0^2} - T_j )\cdot \frac{1}{n-T_j}\geq  1 - r \right)   \\
&& \leq    \exp\left( - \frac{T_j } {4} \left(\sqrt{ \frac{2(1-r) (n - T_j)}{\frac{n^2 \zeta_j}{\lambda_0^2} - T_j} - 1} - 1\right)^2 \right)\\
 & &\leq   \exp\left( - \frac{T_{\min} } {4} \left(\sqrt{ \frac{2(1-r) (n - T_{\max})}{\frac{n^2 \zeta}{\lambda_0^2} - T_{\min}} - 1} - 1\right)^2 \right).
\end{eqnarray*}
To bound the last  term in (\ref{xibound})   
we  first obtain, for each $j$:
\eqn{
\mpr\left(\frac{\|\epsilon\|_2^2}{n - T_j}< 1-r\right)\leq \exp\left({-\frac{(n-T_j)r^2}{4}}\right) \leq \exp\left({-\frac{(n-T_{\max})r^2}{4}}\right).
}
Hence,
\begin{eqnarray*}
&& \mpr\left(V\geq \lambda_0\right) \\
&&\leq  q\cdot \exp\left( - \frac{T_{\min} } {4} \left(\sqrt{ \frac{2(1-r) (n - T_{\max})}{\frac{n^2 \zeta}{\lambda_0^2} - T_{\min}} - 1} - 1\right)^2 \right) + q \cdot \exp\left({-\frac{(n-T_{\max})r^2}{4}}\right).
\end{eqnarray*}
For $r = 2\sqrt{\frac{\log(2q/\alpha)}{n-T_\text{max}}}$ the last term is bounded by $\alpha/2$.  For this value of $r$ and with

\[ \lambda_0 = \frac{\sqrt{2\zeta}n}{\sqrt{n-T_\text{max}}}\left(1+\sqrt{\frac{2\log(2q/\alpha)}{T_\text{min}}}\right),\]
the first term is also bounded by $\alpha/2$. This concludes the proof.

\end{proof}


\subsection*{Proofs for Section~\ref{sec:comp}}

\begin{lemma}\label{nonexp}
Given any $\lambda$, $\vec\Theta(\cdot;\lambda)$ is nonexpansive: $\| \vec\Theta(x; \lambda) - \vec\Theta(\tilde x; \lambda) \|_2 \leq \|x - \tilde x\|_2$, $\forall x, \tilde x\in {\mathbb R}^{p}$.
\end{lemma}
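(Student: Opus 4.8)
The plan is to prove nonexpansiveness by a direct case analysis on whether the norms $\|x\|_2$ and $\|\tilde x\|_2$ exceed the threshold $\lambda$, after rewriting the operator explicitly. For $a\neq 0$ one has $\vec\Theta(a;\lambda) = a\,(\|a\|_2-\lambda)_+/\|a\|_2$, so $\vec\Theta$ acts as multiplication by the scalar shrinkage factor $c(a):=(\|a\|_2-\lambda)_+/\|a\|_2 \in [0,1)$; in particular $\vec\Theta(a;\lambda)=0$ exactly when $\|a\|_2\leq\lambda$. I would fix $x,\tilde x\in\mathbb{R}^p$ and split into three cases.

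First, if both $\|x\|_2\leq\lambda$ and $\|\tilde x\|_2\leq\lambda$, then both images vanish and the inequality is trivial. Second, if (say) $\|x\|_2>\lambda\geq\|\tilde x\|_2$, then $\vec\Theta(\tilde x;\lambda)=0$ and $\|\vec\Theta(x;\lambda)\|_2=\|x\|_2-\lambda$, so the reverse triangle inequality $\|x-\tilde x\|_2\geq\|x\|_2-\|\tilde x\|_2\geq\|x\|_2-\lambda$ settles this case.

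The main work is the third case, where $\|x\|_2>\lambda$ and $\|\tilde x\|_2>\lambda$. Here I would set $u:=\|x\|_2$, $v:=\|\tilde x\|_2$, and write $x=u\hat x$, $\tilde x=v\hat{\tilde x}$ with unit vectors $\hat x,\hat{\tilde x}$. Using $\vec\Theta(x;\lambda)-\vec\Theta(\tilde x;\lambda)=(x-\tilde x)-\lambda(\hat x-\hat{\tilde x})$ and expanding the squared norm, the desired bound $\|\vec\Theta(x;\lambda)-\vec\Theta(\tilde x;\lambda)\|_2^2\leq\|x-\tilde x\|_2^2$ reduces to $\lambda\|\hat x-\hat{\tilde x}\|_2^2\leq 2(x-\tilde x)\cdot(\hat x-\hat{\tilde x})$. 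A short computation gives $\|\hat x-\hat{\tilde x}\|_2^2=2(1-\hat x\cdot\hat{\tilde x})$ and $(x-\tilde x)\cdot(\hat x-\hat{\tilde x})=(u+v)(1-\hat x\cdot\hat{\tilde x})$, so the claim becomes $\lambda(1-\hat x\cdot\hat{\tilde x})\leq(u+v)(1-\hat x\cdot\hat{\tilde x})$. Since $\hat x\cdot\hat{\tilde x}\leq 1$ renders the common factor nonnegative and $u+v>2\lambda>\lambda$, this holds.

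I expect this third case to be the only real obstacle; the trick that makes it clean is separating each argument into its magnitude and direction, so that the cross term collapses to the scalar quantity $(u+v)(1-\hat x\cdot\hat{\tilde x})$. As an alternative, one may observe that $\vec\Theta(\cdot;\lambda)$ is exactly the proximal operator $\mathrm{prox}_{\lambda\|\cdot\|_2}$ of the convex function $\lambda\|\cdot\|_2$, whence firm nonexpansiveness --- and in particular the desired $1$-Lipschitz bound --- follows from standard convex-analysis facts; but the elementary argument above avoids invoking that machinery and keeps the proof self-contained.
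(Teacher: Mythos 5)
Your proof is correct and follows essentially the same route as the paper's: a case split on whether $\|x\|_2$ and $\|\tilde x\|_2$ exceed $\lambda$, with the both-large case reducing to the same scalar inequality $\left(\|x\|_2+\|\tilde x\|_2-\lambda\right)\left(1-\hat x\cdot\hat{\tilde x}\right)\geq 0$ that the paper obtains via the cosine rule. The only differences are cosmetic: in the mixed case you use the reverse triangle inequality where the paper expands the squared norms algebraically, and you explicitly record the trivial both-small case (and the prox-operator alternative) that the paper omits.
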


\begin{proof}
Define  $\Delta = \|x - \tilde x\|_2^2 -  \|\vec \Theta( x; \lambda) - \vec \Theta( \tilde x; \lambda)\|_2^2$,   $a=\|x\|_2$, $b=\|\tilde x\|_2$, and $c = x' \tilde x/(ab)$. Obviously, $|c|\leq 1$ and  $c = \vec \Theta( x; \lambda)' \vec \Theta( \tilde x; \lambda) /(ab)$.
By the Cosine Rule,
\begin{align*}
\| x - \tilde x \|_2^2 & = a^2 + b^2 - 2abc\\
\| \vec \Theta( x; \lambda) - \vec \Theta( \tilde x; \lambda) \|_2^2 & = ((a-\lambda)_+)^2 + ((b-\lambda)_+)^2 - 2(a-\lambda)_+ (b-\lambda)_+ c.
\end{align*}

 (i) Suppose $a>\lambda$ and $b>\lambda$. Then $\Delta = -2\lambda^2 +2(a+b)\lambda  +2\lambda^2 c - 2\lambda(a+b) c = 2(1-c) \lambda (a+b-\lambda)\geq 0$.

(ii) Suppose $a< \lambda$ and  $b>\lambda$. Then $\Delta = a^2 + b^2 - 2 abc - (b-\lambda)^2 = a^2 - 2 abc - \lambda^2 + 2b \lambda \geq a^2 - 2 ab  - \lambda^2 + 2b \lambda = (2b - a -\lambda) (\lambda-a) \geq 0$. Therefore, $\|\vec \Theta( x; \lambda) - \vec \Theta( \tilde x; \lambda)\|_2 \leq \|x - \tilde x\|_2$.
\end{proof}

\begin{proof}[Proof of Theorem \ref{th:stisp-conv}]
By Lemma \ref{nonexp},  the mapping \eqref{tisp-iter} is nonexpansive. We use  Opial's conditions \cite{Opial67,She10} for studying nonexpansive operators to prove the strict convergence of $\beta{(t)}$.
The key of the proof is  to show the mapping  is \emph{asymptotically regular}: $\| \beta{(t+1)} -\beta{(t)} \| \rightarrow 0$
as $t\rightarrow \infty$, for any starting point $\beta{(0)}$.

Assume the scaling operations \eqref{scaling} have performed beforehand.
 Let $F(\beta)=\|Y - X \beta\|_2 + \sum_{j=1}^q \lambda_j \|\beta^j\|_2$ be the objective function.
Introduce a surrogate function
\begin{eqnarray}
G(\beta, \gamma) & =& \| Y - X \beta \|_2 + \frac{1}{\| X \beta - Y\|_2} (\gamma - \beta)' X' (X \beta - Y) \notag \\ & & + \frac{1}{2\| X \beta - Y\|_2} \| \beta - \gamma\|_2^2 + \sum_j \lambda_j  \|\gamma^j\|_2.
\end{eqnarray}

Given $\beta$,  algebraic manipulations   show that minimizing $G$ over $\gamma$ is equivalent to
\begin{eqnarray}
&&\min_\gamma   \frac{1}{\| X \beta - Y\|_2} (\gamma - \beta)' X' (X \beta - Y) + \frac{1}{2\| X \beta - Y\|_2} \| \beta - \gamma\|_2^2 + \sum_j \lambda_j  \|\gamma^j\|_2
\Longleftrightarrow \notag\\
 && \min_\gamma \frac{1}{\| X \beta - Y\|_2} \left( \frac{1}{2} \left\|\gamma - \left [\beta+X' Y-X' X \beta\right ]\right\|_2^2 +  \| X \beta - Y\|_2 \sum_j \lambda_j  \|\gamma^j\|_2\right). \label{optovergamma}
\end{eqnarray}

Applying Lemma 1 and Lemma 2 in \cite{She12}, we have the optimal $\gamma_o$  given by
\begin{align}
\gamma_o^j  = \vec \Theta( \beta^j  + (X^j)'  Y -    (X^j)' X \beta   ;  \| X \beta  - Y\|_2 \lambda_j), \quad 1\leq j \leq q,
\end{align}
and  further obtain
\begin{align*}
G(\beta, \gamma_o + \delta) - G(\beta, \gamma_o)  \geq  \frac{\|\delta\|_2^2}{2 \| X \beta - Y\|_2}.
\end{align*}

On the other hand, a Taylor series expansion gives
\begin{align*}
&\| Y - X \beta\|_2 +
\frac{1}{\| X \beta - Y\|_2} (\gamma - \beta)' X' (X \beta - Y)   - \| Y - X \gamma \|_2\\
= & - \frac{1}{2} (\beta - \gamma)' \left[ \frac{1}{\| X \xi - Y\|_2} X' X - \frac{1}{\| X \beta - Y\|_2^3} X' (X\xi - Y) (X \xi - Y)' X\right] (\beta - \gamma),
\end{align*}
for some $\xi=\vartheta\beta+(1-\vartheta) \beta$ with $\vartheta \in (0, 1)$.

Now, for the iterates defined by \eqref{tisp-iter}, we obtain
\begin{eqnarray*}
&&F(\beta{(t+1)})+\frac{1}{2}(\beta{(t+1)}-\beta{(t)})' \left(\frac{1}{\| X \beta{(t)}  - Y\|_2} I - \frac{1}{\| X \xi{(t)}  - Y\|_2} X' X \right)(\beta{(t+1)}-\beta{(t)}) \\
&=&  G(\beta{(t)},\beta{(t+1)})\\
 &\leq& G(\beta{(t)},\beta{(t)})-\frac{1}{2} \frac{1}{  \| X \beta{(t)}  - Y\|_2} (\beta{(t+1)}-\beta{(t)})'  (\beta{(t+1)}-\beta{(t)})\\
&=&F(\beta{(t)})-\frac{1}{2} \frac{1}{  \| X \beta{(t)}  - Y\|_2}  (\beta{(t+1)}-\beta{(t)})'  (\beta{(t+1)}-\beta{(t)}),
\end{eqnarray*}
for some $\xi{(t)}=\vartheta{(t)}\beta{(t)}+(1-\vartheta{(t)}) \beta{(t+1)}$ with $\vartheta{(t)} \in (0, 1)$. Therefore, with $\| X\|$ standing for the operator norm of $X$,
\begin{eqnarray}
F(\beta{(t)}) - F(\beta{(t+1)}) \geq  \frac{1}{2} \left(  \frac{2}{  \| X \beta{(t)}  - Y\|_2} - \frac{\|  X\|^2}{\| X \xi{(t)}  - Y\|_2}  \right) \| \beta{(t+1)}-\beta{(t)} \|_2^2. \label{errbnd}
\end{eqnarray}
Under the regularity condition and  for $K$ large enough, $F(\beta{(t)})$ is monotonically decreasing. 
In fact, with $\|X\xi(t) - Y\|_2>\epsilon$ and  $M \triangleq F(\beta(0))$, $\| X\|_2 < 2\epsilon /M$ suffices. 
It follows that    
\begin{align*}
F(\beta(t+1)) \leq F(\beta(t)) \leq M, \forall t, \mbox{ and } \left({\frac{2}{  \| X \beta{(t)}  - Y\|_2} - \frac{\|  X\|^2}{\| X \xi{(t)}  - Y\|_2}  }\right) \| \beta{(t+1)}-\beta{(t)} \|_2^2 \rightarrow 0 \mbox{ as } t\rightarrow \infty. 
\end{align*}
This, together with the  conditions in the theorem, implies that    $\beta(t)$ is uniformly bounded and asymptotically regular. 
Finally, the fixed point set of the mapping is non-empty because it is a nonexpansive mapping into a bounded closed convex subset \cite{browder1965nonexpansive}. 

 With all of Optial's conditions  satisfied, $\beta(t)$ has a unique limit point $\beta^*$. It is easy to verify that $\beta^*$ as a fixed point of \eqref{tisp-iter} satisfies the KKT equations \eqref{kkt1} and \eqref{kkt2}. This means $\beta^*$ is a global minimum.
\end{proof}

\subsection*{Acknowledgements}
We thank the associate editor and the referees for their detailed and helpful comments.

\bibliography{Literature4}

\end{document}